\def\no{\noindent}
\def\pmatrix{\left(\begin{array}}
\def\endpmatrix{\end{array}\right)}
\newtheorem{theo}{Theorem}
\newtheorem{lem}{Lemma}
\newtheorem{rem}{Remark}
\newtheorem{defi}{Definition}
\newcommand{\norm}[1]{\left\Vert#1\right\Vert}
\title{Functionally-fitted energy-preserving integrators\\ for Poisson systems}
\author{Bin Wang\,
\footnote{School of Mathematical Sciences, Qufu Normal University,
Qufu 273165, P.R. China; Mathematisches Institut, University of
T\"{u}bingen, Auf der Morgenstelle 10, 72076 T\"{u}bingen, Germany.
The research is supported in part by the Alexander von Humboldt
Foundation and by the Natural Science Foundation of Shandong
Province (Outstanding Youth Foundation) under Grant ZR2017JL003.
E-mail:~{\tt wang@na.uni-tuebingen.de} } \and Xinyuan
Wu\thanks{School of Mathematical Sciences, Qufu Normal University,
Qufu 273165, P.R. China; Department of Mathematics, Nanjing
University, Nanjing 210093, P.R. China. The research is supported in
part by the National Natural Science Foundation of China under Grant
11671200. E-mail:~{\tt xywu@nju.edu.cn}} }
\begin{document}
\maketitle

\begin{abstract} In this paper, a new class of energy-preserving integrators is
proposed and analysed for Poisson systems by using
functionally-fitted technology. The integrators exactly preserve
energy and have arbitrarily high order. It is shown that the
proposed approach allows us to obtain the energy-preserving methods
derived in BIT 51 (2011) by Cohen and Hairer and in J. Comput. Appl.
Math. 236 (2012) by Brugnano et al. for Poisson systems.
Furthermore, we study the sufficient conditions that ensure the
existence of a unique solution and discuss the order of the new
energy-preserving integrators.

\medskip
\no{\bf Keywords:}  Poisson systems,  energy preservation,
functionally-fitted integrators

\medskip
\no{\bf MSC:}65P10, 65L05

\end{abstract}

\section{Introduction}
In this paper, we deal with the efficient numerical integrators for
solving   the Poisson systems  (non-canonical Hamiltonian systems)
\begin{equation}\label{poisson sys}
 \dot{y}=B(y) \nabla H(y),\quad y(0)=y_{0}\in\mathbb{R}^{d},\quad
t\in[0,T],
\end{equation}
where $B(y)$ is a skew-symmetric matrix  which is not required to
satisfy  the Jacobi identity. It is well known that the energy  $
H(y)$ is preserved along the exact solution of \eqref{poisson sys},
since
$$ \frac{d H(y)}{dt} =\nabla H(y)^{\intercal}\dot{y}=\nabla H(y)^{\intercal} B(y) \nabla H(y)=0.$$
Numerical integrators that preserve  $ H(y)$ are usually called
energy-preserving (EP) integrators, and the aim of this paper is to
formulate and analyse novel EP integrators for  efficiently solving
Poisson systems.


When the matrix  $B(y)$ is independent of $y$, the system
\eqref{poisson sys} becomes a canonical Hamiltonian system. There
have been a lot of studies on numerical methods   for this system,
and  the reader is referred to
\cite{Feng-book,Hairer09,Hairer16,hairer2006,JCP2017_Mei_Wu,wang-2016,wang2014,wu2017-JCAM,wubook2015,wu2013-book}
and references therein. For  canonical  Hamiltonian systems,  EP
methods are an important and efficient kind of methods and many
various of  EP methods have been derived and studied in the past few
decades, such as the average vector field (AVF) method (see, e.g.
   \cite{Celledoni2010,Celledoni14,Quispel08}), discrete gradient
methods  (see, e.g.  \cite{McLachlan14,McLachlan99}) ,  Hamiltonian
Boundary Value Methods (HBVMs) (see, e.g.
\cite{Brugnano2010,Brugnano2012b}) ,   EP collocation methods  (see,
e.g. \cite{Hairer2010} )
 and exponential/trigonometric EP
   methods  (see, e.g. \cite{Li_Wu(sci2016),Miyatake2014,wang2012-1,17-new,wu2013-JCP}).

 Among these  EP methods for solving $\dot{y}=J \nabla H(y)$,  the
AVF method  has the simplest form, which was given by Quispel and
McLaren \cite{Quispel08} as follows
\begin{equation}\label{AVF}
 y_{1} =y_0+h  \int_{0}^1  J \nabla H(y_0+\sigma
(y_1-y_0)) d \sigma.
\end{equation}
 Hairer extended this second-order
method to  higher order schemes  by introducing continuous stage
Runge--Kutta
 methods \cite{Hairer2010}. However,  because
the dependence of the matrix $B(y)$  should be discretised in a
different manner, Poisson systems usually require an additional
technique. Therefore,  the
 novel EP methods which are specially designed and analysed  for Poisson systems are  necessary.
McLachlan et al. \cite{McLachlan99} discussed DG methods for various
kinds of ODEs including Poisson systems.
  Cohen and Hairer in  \cite{Cohen-2011} succeeded
in constructing arbitrary high-order EP schemes for Poisson systems
and the following second-order EP scheme for \eqref{poisson sys} was
derived
\begin{equation}\label{EXAVF}
 y_{1} =y_0+h  B\big(\frac{y_1+y_0}{2}\big)\int_{0}^1   \nabla H(y_0+\sigma
(y_1-y_0)) d \sigma.
\end{equation}
 Following the ideas
of  HBVMs,  Brugnano et al. gave  an alternative derivation of such
methods and presented a new proof of their orders in
\cite{Brugnano2012}.
 EP exponentially-fitted
integrators for Poisson systems were researched by Miyatake
\cite{Miyatake2015}. Based on discrete gradients, Dahlby et al.
\cite{Dahlby2012} constructed  useful methods that simultaneously
preserve  several invariants in systems of type \eqref{poisson sys}.

On the other hand,  the functionally-fitted (FF)  technology is a
popular approach to constructing  effective and efficient  methods
in scientific computing.  An FF method is generally derived
 by requiring it to integrate members of a given
finite-dimensional function space $X$ exactly.  The corresponding
methods are  called as  trigonometrically-fitted (TF) or
exponentially-fitted (EF) methods if $X$ is generated by
  trigonometrical or exponential functions. Using
FF/TF/EF technology, many efficient methods have been constructed
for canonical Hamiltonian systems including the symplectic methods
(see, e.g.
\cite{Calvo2009,Calvo2010a,Calvo2010b,Franco2007,VandenB2003,VandeVyver2006,wang2017-Cal,wu-2012-BIT})
 and   EP methods (see, e.g.
\cite{Li_Wu(na2016),Miyatake2014}). This technology has also been
used successfully for Poisson systems in \cite{Miyatake2015} and
 second- and fourth-order schemes were derived.
 In this  paper,  using
the functionally-fitted technology, we will   design and analyse
novel EP integrators for Poisson systems. The new integrators can be
of arbitrary order in a  routine and  convenient manner,   and
different EP schemes can be obtained by considering different
function spaces. It will be shown that  choosing a special function
space allows us to obtain the
 EP  schemes  given by Cohen and Hairer \cite{Cohen-2011}
and Brugnano et al. \cite{Brugnano2012}.

This paper is organised as follows. In Section \ref{fomulation of
methods}, we derive the  EP   integrators  for Poisson systems.
Section \ref{implementations} is devoted to the implementation
issues.  The existence and uniqueness of the integrators
 are studied in Section \ref{existence} and their algebraic orders
are discussed in Section \ref{sec order}.
 In Section \ref{methods},  two second-order
EP   schemes are presented as illustrative examples. Numerical
experiments are implemented  in  Section \ref{Numerical
experiments}, where we consider the Euler equation.   The last
section includes some
 conclusions.

\section{Functionally-fitted EP integrators}\label{fomulation of methods}
In this paper, we define a function space
$Y$=span$\left\{\varphi_{0}(t),\ldots,\varphi_{r-1}(t)\right\}$ on
$[0,T]$
  by
\begin{equation*}
Y=\left\{w : w(t)=\sum_{i=0}^{r-1}\varphi_{i}(t)W_{i},\ t\in I,\
W_{i}\in\mathbb{R}^{d}\right\},
\end{equation*}
where $\{\varphi_{i}(t)\}_{i=0}^{r-1}$ are  linearly independent on
$[0,T]$ and sufficiently smooth.  We then   consider the following
two finite-dimensional function spaces  $Y$ and $X$
\begin{equation*}\label{FF}
Y=\text{span}\left\{\varphi_{0}(t),\ldots,\varphi_{r-1}(t)\right\},\quad
X=\text{span}\left\{1,\int_{0}^{t}\varphi_{0}(s)ds,\ldots,\int_{0}^{t}\varphi_{r-1}(s)ds\right\}.
\end{equation*}
Choose a stepsize $h$ and define the function spaces $Y_{h}$ and
$X_{h}$ on $[0,1]$ by
\begin{equation}\label{FF-h}
Y_{h}=\text{span}\left\{\tilde{\varphi}_{0}(\tau),\ldots,\tilde{\varphi}_{r-1}(\tau)\right\},\quad
X_{h}=\text{span}\left\{1,\int_{0}^{\tau}\tilde{\varphi}_{0}(s)ds,\ldots,\int_{0}^{\tau}\tilde{\varphi}_{r-1}(s)ds\right\},
\end{equation}
 where $\tilde{\varphi}_{i}(\tau)=\varphi_{i}(\tau h),\ \tau\in[0,1]$
for $i=0,1,\ldots,r-1$. It is noted that    the notation $\tilde{f}
(\tau)$ is referred to as $f(\tau h)$ for all the functions
throughout this paper.

A projection  given in \cite{Li_Wu(na2016)} will be used in this
paper and we    summarise  its definition as follows.
\begin{defi}\label{def projection} (See \cite{Li_Wu(na2016)})
  The definition of
$\mathcal{P}_{h}\tilde{w}$ is given by
\begin{equation}\label{DEF}
\langle
\tilde{v}(\tau),\mathcal{P}_{h}\tilde{w}(\tau)\rangle=\langle
\tilde{v}(\tau),\tilde{w}(\tau)\rangle,\quad \text{for any}\ \
\tilde{v}(\tau)\in Y_{h},
\end{equation}
where $\tilde{w}(\tau)$ be a continuous $\mathbb{R}^{d}$-valued
function on $[0,1]$ and $\mathcal{P}_{h}\tilde{w}(\tau)$ is a
projection of $\tilde{w}$ onto $Y_{h}$.  Here the inner product
$\langle\cdot,\cdot\rangle$ is defined by
\begin{equation*}
\langle \tilde{w}_{1},\tilde{w}_{2}\rangle=\langle
\tilde{w}_{1}(\tau),\tilde{w}_{2}(\tau)\rangle_{\tau}=\int_{0}^{1}\tilde{w}_{1}(\tau)\cdot
\tilde{w}_{2}(\tau)d\tau,
\end{equation*}
where $\tilde{w}_{1}$ and $\tilde{w}_{2}$ are two integrable
functions (scalar-valued or vector-valued)
 on $[0,1]$, and  if they are  both vector-valued functions, `$\cdot$' denotes the entrywise multiplication
operation.
\end{defi}

We also need the  following property of  $\mathcal{P}_{h}$ which has
been proved in \cite{Li_Wu(na2016)}.

\begin{lem}\label{proj lem}(See \cite{Li_Wu(na2016)}) The projection $\mathcal{P}_{h}\tilde{w}$ can be explicitly expressed as
\begin{equation*}
\mathcal{P}_{h}\tilde{w}(\tau)=\langle
P_{\tau,\sigma},\tilde{w}(\sigma)\rangle_{\sigma},
\end{equation*}
where
\begin{equation*}\label{PCOEFF}
P_{\tau,\sigma}=\sum_{i=0}^{r-1}\tilde{\psi}_{i}(\tau)\tilde{\psi}_{i}(\sigma),
\end{equation*}
and $\left\{\tilde{\psi}_{0},\ldots,\tilde{\psi}_{r-1}\right\}$ is a
standard orthonormal basis of $Y_{h}$ under the inner product
$\langle\cdot,\cdot\rangle$.
\end{lem}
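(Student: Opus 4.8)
The plan is to verify directly that the explicit formula $\mathcal{P}_{h}\tilde{w}(\tau) = \langle P_{\tau,\sigma}, \tilde{w}(\sigma)\rangle_{\sigma}$ with $P_{\tau,\sigma} = \sum_{i=0}^{r-1}\tilde{\psi}_{i}(\tau)\tilde{\psi}_{i}(\sigma)$ satisfies the two defining requirements of the projection in Definition \ref{def projection}: first, that the right-hand side indeed lies in $Y_{h}$, and second, that it reproduces the inner products $\langle \tilde{v}(\tau), \tilde{w}(\tau)\rangle$ against every $\tilde{v}(\tau) \in Y_{h}$. Since $\{\tilde{\psi}_{0},\ldots,\tilde{\psi}_{r-1}\}$ is a fixed orthonormal basis of $Y_{h}$ and the defining relation \eqref{DEF} determines $\mathcal{P}_{h}\tilde{w}$ uniquely (the Gram matrix of any basis of $Y_{h}$ is nonsingular by linear independence), establishing these two properties identifies the candidate with $\mathcal{P}_{h}\tilde{w}$.

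First I would expand the right-hand side componentwise. Writing $\langle P_{\tau,\sigma}, \tilde{w}(\sigma)\rangle_{\sigma} = \sum_{i=0}^{r-1}\tilde{\psi}_{i}(\tau)\,\langle \tilde{\psi}_{i}(\sigma), \tilde{w}(\sigma)\rangle_{\sigma}$, one sees immediately that this is a linear combination of the basis functions $\tilde{\psi}_{i}$ with scalar coefficients $c_{i} := \langle \tilde{\psi}_{i}, \tilde{w}\rangle$, hence it belongs to $Y_{h}$; here one must be slightly careful that the inner product acts entrywise on the $\mathbb{R}^d$-valued $\tilde{w}$, so $c_i$ is the vector $\langle \tilde{\psi}_i(\sigma), \tilde{w}(\sigma)\rangle_\sigma \in \mathbb{R}^d$ and the basis functions $\tilde\psi_i$ are scalar-valued, which is consistent with the structure of $Y_h$. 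Next I would check the reproducing property: for arbitrary $\tilde{v} \in Y_{h}$, expand $\tilde{v} = \sum_{j} a_{j}\tilde{\psi}_{j}$ and compute
\begin{equation*}
\langle \tilde{v}(\tau), \langle P_{\tau,\sigma},\tilde{w}(\sigma)\rangle_{\sigma}\rangle_{\tau}
= \sum_{i,j} \langle \tilde{\psi}_{j}(\tau)\tilde{\psi}_{i}(\tau)\rangle_{\tau}\, a_{j}\, c_{i}
= \sum_{i} a_{i} c_{i},
\end{equation*}
using orthonormality $\langle \tilde{\psi}_{i},\tilde{\psi}_{j}\rangle = \delta_{ij}$; on the other hand $\langle \tilde{v}(\tau),\tilde{w}(\tau)\rangle_{\tau} = \sum_{j} a_{j}\langle \tilde{\psi}_{j},\tilde{w}\rangle = \sum_{j} a_{j} c_{j}$, so the two agree. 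One should double-check that Fubini applies to justify interchanging the $\tau$- and $\sigma$-integrations, which is immediate since all functions involved are continuous on the compact interval $[0,1]$.

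The main obstacle, such as it is, is mostly bookkeeping rather than depth: one must track carefully how the entrywise ("$\cdot$") inner product interacts with the scalar-valued orthonormal basis $\{\tilde\psi_i\}$ versus the vector-valued $\tilde w$, so that the coefficients $c_i$ are correctly interpreted as $\mathbb{R}^d$-vectors and the identities above hold coordinatewise. Once the componentwise viewpoint is adopted this reduces the whole statement to the classical formula for orthogonal projection onto a finite-dimensional subspace spanned by an orthonormal basis, applied in each of the $d$ coordinates simultaneously. Since this lemma is quoted from \cite{Li_Wu(na2016)}, I would expect the paper to either cite it without proof or present exactly this short computation.
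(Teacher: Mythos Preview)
Your argument is correct: you verify that the candidate lies in $Y_h$, check the defining relation \eqref{DEF} against an arbitrary $\tilde v\in Y_h$ using orthonormality, and invoke uniqueness via the nonsingular Gram matrix, with appropriate care for the entrywise vector-valued inner product. The paper itself does not prove this lemma at all---it simply quotes it from \cite{Li_Wu(na2016)} (as you anticipated in your final sentence)---so your short computation is exactly the standard justification that the cited reference would supply.
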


 On the basis of  these preliminaries, we first present the
definition of the
  integrators and then show that they exactly preserve the
energy of    Poisson system \eqref{poisson sys}.

\begin{defi}\label{def EDCr}
We consider a function $\tilde{u}(\tau)\in X_{h}$ with
$\tilde{u}(0)=y_{0}$, satisfying
\begin{equation} \label{EDCr}
\begin{aligned} &
\tilde{u}^{\prime}(\tau) =B(\tilde{u}(\tau))\mathcal{P}_{h}\big(
\nabla H(\tilde{u}(\tau))\big),\ \ \ \tau\in[0,1].
\end{aligned}
\end{equation}
The numerical solution after one step is then defined by $
y_{1}=\tilde{u}(1).$ In this paper, we call the integrator   as
functionally-fitted EP (FFEP) integrator.
\end{defi}

\begin{theo}\label{ED}
The FFEP integrator \eqref{EDCr}  exactly preserves  the energy,
i.e.,
$$H(y_{1})= H(y_{0}).$$
\end{theo}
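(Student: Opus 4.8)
The plan is to reproduce, at the level of the internal stage function $\tilde{u}$, the continuous energy computation $\frac{\dd}{\dd t}H(y)=\nabla H(y)^{\intercal}B(y)\nabla H(y)=0$, the only new ingredient being that exactly one of the two gradients is replaced by its projection $\mathcal{P}_{h}$. Since $\tilde{u}\in X_{h}$ satisfies $\tilde{u}(0)=y_{0}$ and $y_{1}=\tilde{u}(1)$, the fundamental theorem of calculus and the chain rule give
\begin{equation*}
H(y_{1})-H(y_{0})=\int_{0}^{1}\frac{\dd}{\dd\tau}H\big(\tilde{u}(\tau)\big)\,\dd\tau=\int_{0}^{1}\nabla H\big(\tilde{u}(\tau)\big)^{\intercal}\tilde{u}^{\prime}(\tau)\,\dd\tau ,
\end{equation*}
so the statement reduces to showing that this integral vanishes.

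The crucial observation --- and, I expect, the only step that is not routine --- is that $\tilde{u}^{\prime}\in Y_{h}$. This is precisely why $X_{h}$ in \eqref{FF-h} was built from the constant $1$ together with the antiderivatives of the $\tilde{\varphi}_{i}$: componentwise, any element of $X_{h}$ is a constant plus a linear combination of the functions $\int_{0}^{\tau}\tilde{\varphi}_{i}(s)\,\dd s$, so its derivative is a linear combination of the $\tilde{\varphi}_{i}$, i.e.\ an element of $Y_{h}$. Granting this, I would apply the defining property \eqref{DEF} of $\mathcal{P}_{h}$ with $\tilde{v}=\tilde{u}^{\prime}$ and $\tilde{w}(\tau)=\nabla H(\tilde{u}(\tau))$ (reading that vector identity componentwise and summing the components), which turns the integral above into
\begin{equation*}
H(y_{1})-H(y_{0})=\int_{0}^{1}\mathcal{P}_{h}\big(\nabla H(\tilde{u}(\tau))\big)^{\intercal}\tilde{u}^{\prime}(\tau)\,\dd\tau .
\end{equation*}

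Finally I would substitute the stage equation \eqref{EDCr}, $\tilde{u}^{\prime}(\tau)=B(\tilde{u}(\tau))\mathcal{P}_{h}(\nabla H(\tilde{u}(\tau)))$, into the last display. Writing $g(\tau):=\mathcal{P}_{h}(\nabla H(\tilde{u}(\tau)))$, the integrand becomes the scalar $g(\tau)^{\intercal}B(\tilde{u}(\tau))g(\tau)$, which vanishes identically because $B$ is skew-symmetric; hence the integral is zero and $H(y_{1})=H(y_{0})$. The argument is unconditional once a function $\tilde{u}$ as in Definition \ref{def EDCr} is known to exist, so it is legitimate to assume its existence here --- that point, together with uniqueness, is the subject of Section \ref{existence}.
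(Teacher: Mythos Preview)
Your argument is correct and is essentially the same as the paper's own proof: both use $\tilde{u}'\in Y_{h}$ to replace $\nabla H(\tilde{u})$ by $\mathcal{P}_{h}\nabla H(\tilde{u})$ in the energy integral via the componentwise projection identity, then substitute \eqref{EDCr} and invoke skew-symmetry of $B$. Your write-up is in fact a bit more explicit about why $\tilde{u}'\in Y_{h}$ than the paper, which simply asserts it.
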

\begin{proof}
Since $\tilde{u}\in X_{h}$, one gets $\tilde{u}'(\tau)\in Y_h$. By
the definition of $\mathcal{P}_{h}$, we have
\begin{equation*}
\int_{0}^{1}\tilde{u}'(\tau)_{i}\Big( \mathcal{P}_{h}\big( \nabla
H(\tilde{u}(\tau))\big)\Big)_{i}d\tau=\int_{0}^{1}\tilde{u}'(\tau)_{i}
\big( \nabla H(\tilde{u}(\tau))\big)_{i}d\tau,\quad i=1,2,\ldots,d,
\end{equation*}
where   $(\cdot)_{i}$ denotes the $i$th entry of a vector. Then, we
obtain
\begin{equation*}
\int_{0}^{1}\tilde{u}'(\tau)^{\intercal}\mathcal{P}_{h}\big( \nabla
H(\tilde{u}(\tau))\big)
d\tau=\int_{0}^{1}\tilde{u}'(\tau)^{\intercal}   \nabla
H(\tilde{u}(\tau)) d\tau.
\end{equation*}
Therefore, one has
\begin{equation*}
\begin{aligned}
&H(y_{1})-H(y_{0})=\int_{0}^{1}\frac{d}{d\tau}H(\tilde{u}(\tau))d\tau\\
 =&h\int_{0}^{1}\tilde{u}'(\tau)^{\intercal}\nabla
H(\tilde{u}(\tau))d\tau=h\int_{0}^{1}\tilde{u}'(\tau)^{\intercal}\mathcal{P}_{h}\big(
\nabla H(\tilde{u}(\tau))\big)d\tau.
\end{aligned}
\end{equation*}
Inserting the integrator \eqref{EDCr} into this formula yields
\begin{equation*}
\begin{aligned}
H(y_{1})-H(y_{0})&=h\int_{0}^{1}\mathcal{P}_{h}\big( \nabla
H(\tilde{u}(\tau))\big)^{\intercal}B(\tilde{u}(\tau))^{\intercal}\mathcal{P}_{h}\big(
\nabla H(\tilde{u}(\tau))\big)d\tau,
\end{aligned}
\end{equation*}
which proves the result by considering that $B(\tilde{u})$ is a
skew-symmetric matrix.
 \end{proof}

\begin{rem}
Consider   $B(y)$ is a constant skew-symmetric matrix, which means
that \eqref{poisson sys} is a  canonical Hamiltonian system. Then
the FFEP integrator \eqref{EDCr} becomes  the functionally-fitted EP
method derived in Li and Wu \cite{Li_Wu(na2016)}. Besides, if $Y_h$
is particularly generated by the shifted Legendre polynomials on
$[0,1]$,
 then the
FFEP integrator \eqref{EDCr} reduces to  the EP collocation
  method  given by Cohen and Hairer
\cite{Hairer2010} and Brugnano et al.  \cite{Brugnano2012}.
\end{rem}
\section{Implementations issues}\label{implementations}
 We choose the generalized Lagrange interpolation
functions $\{\hat{l}_{i}(\tau)\}_{i=1}^{r}\in Y_{h}$ with respect to
$r$ distinct points $\{\hat{d}_{i}\}_{i=1}^{r}\subseteq[0,1]$ as
follows
\begin{equation}\label{Lagrange}
(\hat{l}_{1}(\tau),\ldots,\hat{l}_{r}(\tau))=(\tilde{\varphi}_{0}(\tau),\tilde{\varphi}_{2}(\tau),\ldots,\tilde{\varphi}_{r-1}(\tau))
\left(\begin{array}{cccc}\tilde{\varphi}_{0}(\hat{d}_{1})&\tilde{\varphi}_{1}(\hat{d}_{1})&\ldots&\tilde{\varphi}_{r-1}(\hat{d}_{1})\\
\tilde{\varphi}_{0}(\hat{d}_{2})&\tilde{\varphi}_{1}(\hat{d}_{2})&\ldots&\tilde{\varphi}_{r-1}(\hat{d}_{2})\\ \vdots&\vdots& &\vdots\\
\tilde{\varphi}_{0}(\hat{d}_{r})&\tilde{\varphi}_{1}(\hat{d}_{r})&\ldots&\tilde{\varphi}_{r-1}(\hat{d}_{r})\\\end{array}\right)^{-1}.
\end{equation}
 Then it can be  easily verified    that
$\{\hat{l}_{i}(\tau)\}_{i=1}^{r}$ is another basis of $Y_h$ and
satisfies $\hat{l}_{i}(\hat{d}_{j})=\delta_{ij}.$ Since
$\tilde{u}'(\tau)\in Y_{h}$, $\tilde{u}'(\tau)$ can be expressed by
the basis of $Y_{h}$ as follows
\begin{equation*}
\tilde{u}'(\tau)=\sum_{i=1}^{r}\hat{l}_{i}(\tau)\tilde{u}'(\hat{d}_{i}).
\end{equation*}
By Lemma \ref{proj lem},   the FFEP integrator \eqref{EDCr}  becomes
\begin{equation*}
\begin{aligned} &
\tilde{u}^{\prime}(\tau) =B(\tilde{u}(\tau)) \int_{0}^1
P_{\tau,\sigma} \nabla H(\tilde{u}(\sigma)) d \sigma.
\end{aligned}
\end{equation*}
 Thus,  one arrives
\begin{equation*}
\begin{aligned} &
\tilde{u}'(\tau)=u'(\tau
h)=\sum_{i=1}^{r}\hat{l}_{i}(\tau)B(\tilde{u}(\hat{d}_{i}))
\int_{0}^1 P_{\hat{d}_{i},\sigma} \nabla H(\tilde{u}(\sigma)) d
\sigma.
\end{aligned}
\end{equation*}
By integration we get
\begin{equation*}
\begin{aligned} &
\tilde{u} (\tau) =u(\tau h)=y_0+h\int_{0}^\tau
\sum_{i=1}^{r}\hat{l}_{i}(\alpha)d\alpha B(\tilde{u}(\hat{d}_{i}))
\int_{0}^1 P_{\hat{d}_{i},\sigma} \nabla H(\tilde{u}(\sigma)) d
\sigma. \\
\end{aligned}
\end{equation*}
Denoting   $y_{\sigma}=\tilde{u}(\sigma)$, we obtain  practical
schemes  of  the FFEP integrator \eqref{EDCr} for Poisson system
\eqref{poisson sys}.

\begin{defi}\label{def PEEPCr-2} A
practical   scheme of the FFEP integrator \eqref{EDCr}  for Poisson
system \eqref{poisson sys} is defined  by
\begin{equation}\label{PEEPCr-2}
\left\{\begin{aligned}& y_{\tau} =y_0+h\sum_{i=1}^{r}\int_{0}^{\tau}
\hat{l}_{i}(\alpha)d\alpha B(y_{\hat{d}_{i}}) \int_{0}^1
P_{\hat{d}_{i},\sigma} \nabla H(y_\sigma) d
\sigma, \ \ \ 0< \tau<1,\\
& y_{1} =y_0+h\sum_{i=1}^{r}\int_{0}^1 \hat{l}_{i}(\alpha)d\alpha
B(y_{\hat{d}_{i}})\int_{0}^1 P_{\hat{d}_{i},\sigma} \nabla
H(y_\sigma) d
\sigma.\\
\end{aligned}\right.
\end{equation}
\end{defi}

\begin{rem}
Dnoting  \begin{equation*}
 \begin{aligned}& a_{\tau,i}=\int_{0}^{\tau} \hat{l}_{i}(\alpha)d\alpha,\ \ \ \  X_i=h  B(y_{\hat{d}_{i}}) \int_{0}^1
P_{\hat{d}_{i},\sigma} \nabla H(y_\sigma) d \sigma,\\
\end{aligned}
\end{equation*} and choosing
$\tau=\hat{d}_1,\ldots,\hat{d}_r$ for the first formula of
\eqref{PEEPCr-2},  we   get a linear system of equations for
$X_1,\ldots,X_r$ as
\begin{equation*}
 \begin{aligned}& y_{\hat{d}_j}
=y_0+ \sum_{i=1}^{r} a_{\hat{d}_j,i} X_i, \ \ \ \ j=1,\ldots,r.\\
\end{aligned}
\end{equation*}
Solving this linear system by Cramer's rule yields the results of
$X_i$ for $i=1,\ldots,r.$
Then $y_\sigma$ can be expressed as
$$y_{\sigma} =y_0+ \sum_{i=1}^{r}\int_{0}^{\sigma}
\hat{l}_{i}(\alpha)d\alpha X_i.$$
 Therefore, we need the first formula of
\eqref{PEEPCr-2} only for $\tau=\hat{d}_1,\ldots,\hat{d}_r$ and this
presents a nonlinear system of equations for the unknowns
$y_{\hat{d}_1} ,\ \ldots, y_{\hat{d}_r} $ which can be solved by
iteration.

\end{rem}

\begin{rem}
It is noted that the integrals $\int_{0}^{\tau}
\hat{l}_{i}(\alpha)d\alpha$ and $\int_{0}^{1}
\hat{l}_{i}(\alpha)d\alpha$  can be  calculated exactly. The
integral $\int_{0}^1 P_{\hat{d}_{i},\sigma} \nabla H(y_\sigma) d
\sigma$ appearing  in \eqref{PEEPCr-2} can also be computed  exactly
for some cases such as $\nabla H$ is a polynomial and $Y_h$ is
generated by polynomials, exponential or  trigonometrical functions.
If the integral cannot be directly calculated, it is nature  to
approximate it by  a quadrature formula with nodes $c_i$ and weights
$b_i$ for $i = 1,\ldots , s.$ Then the scheme \eqref{PEEPCr-2}
becomes
\begin{equation*}
\left\{\begin{aligned}& y_{\tau} =y_0+h\sum_{i=1}^{r}\int_{0}^{\tau}
\hat{l}_{i}(\alpha)d\alpha B(y_{\hat{d}_{i}}) \sum_{j=1}^{s} b_j
P_{\hat{d}_{i},c_j} \nabla H(y_{c_j}),\\
& y_{1} =y_0+h\sum_{i=1}^{r}\int_{0}^1 \hat{l}_{i}(\alpha)d\alpha
B(y_{\hat{d}_{i}}) \sum_{j=1}^{s} b_j P_{\hat{d}_{i},c_j} \nabla
H(y_{c_j}).\\
\end{aligned}\right.
\end{equation*}

\end{rem}

%

\section{The existence, uniqueness
 and  smoothness}\label{existence}
It is clear  that the FFEP integrator \eqref{EDCr} fails to be well
defined unless the existence and uniqueness is shown. This  section
is devoted to this point.

  It is assumed in this section that the solution of
\eqref{poisson sys} is bounded by
\begin{equation*}
\begin{aligned}
\bar{B}(y_{0},R)=\left\{y\in\mathbb{R}^{d} : ||y-y_{0}||\leq
R\right\},
\end{aligned}
\end{equation*}
where  $R$ is  a positive constant.  The $n$th-order derivatives of
$\nabla H(y)$ and $B(y)$ at $y$ are denoted by $\nabla H^{(n)}(y)$
and $B^{(n)}(y)$, respectively.  Besides, it has been shown  in
\cite{Li_Wu(na2016)} that $P_{\tau,\sigma}$ is a smooth function of
$h$. Under this background, we assume that
\begin{equation*}
\begin{aligned}
&A_{n}=\max_{\tau,\sigma,h\in[0,1]}\mid \frac{\partial^n P_{\tau,\sigma}}{\partial h^n}\mid,\\
&C_{n}=\max_{y\in \bar{B}(y_{0},R)}||B^{(n)}(y)||,\\
&D_{n}=\max_{y\in \bar{B}(y_{0},R)}||\nabla H^{(n)}(y)||,\quad
n=0,1,\ldots.
\end{aligned}
\end{equation*}

\begin{theo}\label{eus}
Under the above assumptions,  the  FFEP integrator \eqref{EDCr} has
a unique solution $\tilde{u}(\tau)$  if the stepsize $h$ satisfies
\begin{equation}\label{cond2} 0\leq
h\leq \delta
<\min\left\{\frac{1}{A_0C_0D_{1}+A_0C_1D_{0}},\frac{R}{A_0C_0D_{0}},1\right\}.
\end{equation}
Moreover,  $\tilde{u}(\tau)$ is smoothly dependent  on  $h$.
\end{theo}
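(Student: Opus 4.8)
\quad The strategy is to rewrite the FFEP integrator as a fixed-point equation on a complete metric space and apply the Banach contraction principle, with the contraction factor arranged so that it reproduces exactly the bound in \eqref{cond2}; smooth dependence on $h$ will then follow from a uniform contraction (implicit function theorem) argument, using that $P_{\tau,\sigma}$ depends smoothly on $h$, as recalled above from \cite{Li_Wu(na2016)}. Concretely, applying Lemma \ref{proj lem} and integrating from $0$ with $\tilde u(0)=y_{0}$, the FFEP integrator \eqref{EDCr} is equivalent to the fixed-point equation $\tilde u=\Phi\tilde u$, where
\begin{equation*}
(\Phi v)(\tau)=y_{0}+h\int_{0}^{\tau}B(v(\alpha))\int_{0}^{1}P_{\alpha,\sigma}\,\nabla H(v(\sigma))\,\dd\sigma\,\dd\alpha ,\qquad\tau\in[0,1],
\end{equation*}
which I would study on the set $\M=\{v\in C([0,1],\mathbb{R}^{d}):\ \|v(\tau)-y_{0}\|\le R\ \text{for all }\tau\}$, a complete metric space for the supremum norm.

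Two elementary estimates carry the whole arithmetic content. First, from $P_{\alpha,\sigma}=\sum_{i}\tilde\psi_{i}(\alpha)\tilde\psi_{i}(\sigma)$ and $\int_{0}^{1}1\,\dd\sigma=1$ one has $\big\|\int_{0}^{1}P_{\alpha,\sigma}w(\sigma)\,\dd\sigma\big\|\le A_{0}\|w\|_{\infty}$ for every continuous $w$, i.e.\ $\mathcal P_{h}$ has sup-norm bound $A_{0}$ uniformly in $h\in[0,1]$. Second, since $\M$ is convex and contained in $\bar{B}(y_{0},R)$, the mean value theorem gives, pointwise in $\alpha$ and for $v,w\in\M$, the Lipschitz bounds $\|B(v)-B(w)\|\le C_{1}\|v-w\|$ and $\|\nabla H(v)-\nabla H(w)\|\le D_{1}\|v-w\|$, together with $\|B(v)\|\le C_{0}$ and $\|\nabla H(v)\|\le D_{0}$.

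The two hypotheses of the contraction principle are then immediate. \emph{Self-mapping:} for $v\in\M$, $\|(\Phi v)(\tau)-y_{0}\|\le h\,C_{0}\,A_{0}D_{0}\le\delta A_{0}C_{0}D_{0}<R$ by the middle term of \eqref{cond2}, so $\Phi(\M)\subseteq\M$. \emph{Contraction:} writing $B(v)\mathcal P_{h}(\nabla H(v))-B(w)\mathcal P_{h}(\nabla H(w))=(B(v)-B(w))\mathcal P_{h}(\nabla H(v))+B(w)\mathcal P_{h}(\nabla H(v)-\nabla H(w))$ and using linearity of $\mathcal P_{h}$ together with the bounds above,
\begin{equation*}
\|\Phi v-\Phi w\|_{\infty}\le h\,A_{0}(C_{1}D_{0}+C_{0}D_{1})\,\|v-w\|_{\infty}\le\delta\,(A_{0}C_{0}D_{1}+A_{0}C_{1}D_{0})\,\|v-w\|_{\infty}<\|v-w\|_{\infty}
\end{equation*}
by the first term of \eqref{cond2}. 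Hence $\Phi$ has a unique fixed point $\tilde u\in\M$, the unique solution of \eqref{EDCr}; moreover any solution that attained $\|\tilde u(\tau)-y_{0}\|=R$ would by the same estimate satisfy $\|\tilde u(\tau)-y_{0}\|<R$, a contradiction, so the uniqueness does not depend on restricting a priori to $\M$.

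For smooth dependence on $h$, note that $(\alpha,\sigma,h)\mapsto P_{\alpha,\sigma}$ is smooth (by \cite{Li_Wu(na2016)}) and $B,\nabla H$ are smooth, so $(v,h)\mapsto\Phi(v;h)$ is jointly smooth, while the contraction estimate shows $\|\partial_{v}\Phi(v;h)\|\le\delta(A_{0}C_{0}D_{1}+A_{0}C_{1}D_{0})<1$ uniformly on $\M\times[0,\delta]$. The uniform contraction principle---equivalently, the implicit function theorem applied to $F(v,h)=v-\Phi(v;h)=0$, whose partial $\partial_{v}F=I-\partial_{v}\Phi$ is boundedly invertible---then yields that $\tilde u=\tilde u(\,\cdot\,;h)$ is smooth in $h$; alternatively one parametrises $\tilde u$ by the finite vector $(\tilde u(\hat{d}_{1}),\dots,\tilde u(\hat{d}_{r}))\in(\mathbb{R}^{d})^{r}$ as in \eqref{PEEPCr-2} and applies the implicit function theorem in that fixed finite-dimensional space. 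I expect the only points needing care to be (i) choosing $\M$ so that the constants $A_{0},C_{n},D_{n}$ are genuinely available along every Picard iterate---exactly what the self-mapping estimate and the middle term of \eqref{cond2} secure---and (ii) the smoothness step, where one must remember that the fitting space $X_{h}$ itself depends on $h$; this is sidestepped by viewing $\tilde u$ as a curve in the fixed space $C([0,1],\mathbb{R}^{d})$ (or in $(\mathbb{R}^{d})^{r}$) rather than in $X_{h}$. The contraction estimate itself, including the precise appearance of $A_{0}C_{0}D_{1}+A_{0}C_{1}D_{0}$, is just the product-rule splitting above and poses no real difficulty.
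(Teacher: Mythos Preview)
Your proof is correct and follows essentially the same route as the paper: both rewrite \eqref{EDCr} as the integral fixed-point equation $\tilde u=\Phi\tilde u$, establish the self-map bound $hA_0C_0D_0<R$ and the contraction constant $\beta=\delta(A_0C_0D_1+A_0C_1D_0)<1$ via the same product-rule splitting, and conclude existence and uniqueness. The only notable difference is in the smoothness step: the paper works concretely, differentiating the Picard iterates \eqref{recur} with respect to $h$ and proving by hand that each sequence $\{\partial_h^k\tilde u_n\}$ is uniformly bounded and Cauchy, whereas you invoke the uniform contraction principle / implicit function theorem on $F(v,h)=v-\Phi(v;h)$ with $\partial_vF$ boundedly invertible. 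Your packaging is cleaner and avoids the somewhat laborious bookkeeping of the paper's derivative estimates, at the cost of citing a standard theorem rather than displaying the mechanism; both are valid and yield the same conclusion.
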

\begin{proof}By Lemma \ref{proj lem}, the FFEP integrator \eqref{EDCr} can be rewritten as
\begin{equation*}
\begin{aligned} &
\tilde{u}^{\prime}(\tau) =B(\tilde{u}(\tau)) \int_{0}^1
P_{\tau,\sigma} \nabla H(\tilde{u}(\sigma)) d \sigma.
\end{aligned}
\end{equation*}
By integration we arrive at
\begin{equation*}\label{method inte}
\begin{aligned} &
\tilde{u} (\tau) =y_0+h\int_{0}^\tau B(\tilde{u}(\alpha)) \int_{0}^1
P_{\alpha,\sigma} \nabla H(\tilde{u}(\sigma)) d \sigma d\alpha.
\end{aligned}
\end{equation*}
Based on this formula, we  get a function series
$\{\tilde{u}_{n}(\tau)\}_{n=0}^{\infty}$ by the following  recursive
definition
\begin{equation}\label{recur}
\tilde{u}_{n+1}(\tau)=  y_0+h\int_{0}^1\big(\int_{0}^\tau
B(\tilde{u}_n(\alpha))
 P_{\alpha,\sigma}d\alpha\big) \nabla H(\tilde{u}_n(\sigma)) d \sigma
, \quad n=0,1,\ldots,
\end{equation}
which will be shown to be  uniformly convergent by proving the
uniform convergence of the infinite series
$\sum_{n=0}^{\infty}(\tilde{u}_{n+1}(\tau)-\tilde{u}_{n}(\tau)).$
Then the integrator \eqref{EDCr}  has  a solution
  $\lim\limits_{n\to\infty}\tilde{u}_{n}(\tau)$.

 We now  prove  the uniform convergence of
$\sum_{n=0}^{\infty}(\tilde{u}_{n+1}(\tau)-\tilde{u}_{n}(\tau)).$
Firstly, it is clear that $||\tilde{u}_{0}(\tau)-y_{0}||=0\leq R$.
We assume that $||\tilde{u}_{n}(\tau)-y_{0}||\leq R$ for
$n=0,\ldots,m$. It then follows from \eqref{cond2} and \eqref{recur}
that
\begin{equation*}
\begin{aligned}
&||\tilde{u}_{m+1}(\tau)-y_0||\leq   hA_0C_0D_0\leq R,\\
\end{aligned}
\end{equation*}
which means that $\tilde{u}_{n}(\tau)$ are uniformly bounded by $
||\tilde{u}_{n}(\tau)-y_{0}||\leq R$ for $n=0,1,\ldots.$ Then based
on  \eqref{recur}, we obtain \begin{equation*}
\begin{aligned}
&\ \ \ \ \|\tilde{u}_{n+1}(\tau)-\tilde{u}_{n}(\tau)\|_c\\
&\leq h\int_{0}^1 \int_{0}^\tau\Big\|\Big[  B(\tilde{u}_n(\alpha))
 P_{\alpha,\sigma}  \nabla H(\tilde{u}_n(\sigma))-  B(\tilde{u}_{n-1}(\alpha))
 P_{\alpha,\sigma}  \nabla H(\tilde{u}_{n-1}(\sigma))\Big]\Big\|_c d\alpha d \sigma\\
 &\leq h\int_{0}^1
\int_{0}^\tau\Big\|\Big[ B(\tilde{u}_n(\alpha))
 P_{\alpha,\sigma}  \nabla H(\tilde{u}_n(\sigma))-  B(\tilde{u}_n(\alpha))
 P_{\alpha,\sigma}  \nabla H(\tilde{u}_{n-1}(\sigma))\\
 &\ \ \ \ +  B(\tilde{u}_n(\alpha))
 P_{\alpha,\sigma}  \nabla H(\tilde{u}_{n-1}(\sigma))-  B(\tilde{u}_{n-1}(\alpha))
 P_{\alpha,\sigma}  \nabla H(\tilde{u}_{n-1}(\sigma))\Big]\Big\|_c d\alpha d \sigma\\
& \leq h(
A_0C_0D_{1}+A_0C_1D_{0})||\tilde{u}_{n}(\sigma)-\tilde{u}_{n-1}(\sigma)||
\leq\beta||\tilde{u}_{n}(\tau)-\tilde{u}_{n-1}(\tau)||_{c},\\
\end{aligned}
\end{equation*}
where  $\beta=\delta( A_0C_0D_{1}+A_0C_1D_{0})$ and  $
||w||_{c}=\max_{\tau\in[0,1]}||w(\tau)||$ for
  a continuous $\mathbb{R}^{d}$-valued function $w$ on
$[0,1]$.  Therefore, one arrives at
\begin{equation*}
||\tilde{u}_{n+1}-\tilde{u}_{n}||_{c}\leq\beta||\tilde{u}_{n}-\tilde{u}_{n-1}||_{c}
\end{equation*}
and
\begin{equation*}\label{recur3}
||\tilde{u}_{n+1}-\tilde{u}_{n}||_{c}\leq\beta^{n}||\tilde{u}_{1}-y_{0}||_{c}\leq\beta^{n}R,\quad
n=0,1,\ldots.
\end{equation*}
By Weierstrass $M$-test and the fact that $\beta<1,$  we  confirm
that
$\sum_{n=0}^{\infty}(\tilde{u}_{n+1}(\tau)-\tilde{u}_{n}(\tau))$ is
uniformly convergent.

If the  integrator    has another solution $\tilde{v}(\tau)$, then
the following inequalities are obtained
 {\begin{equation*}
||\tilde{u}(\tau)-\tilde{v}(\tau)|| \leq\beta
||\tilde{u}(\tau)-\tilde{v}(\tau)||\leq\beta||\tilde{u}-\tilde{v}||_{c},
\end{equation*}
and
\begin{equation*}
\norm{\tilde{u}-\tilde{v}}_{c}\leq\beta||\tilde{u}-\tilde{v}||_{c}.
\end{equation*}}
Therefore, we get  $||\tilde{u}-\tilde{v}||_{c}=0$ and
$\tilde{u}(\tau)\equiv \tilde{v}(\tau)$. Consequently, the solution
of  the  FFEP integrator \eqref{EDCr} exists and is unique.

In what follows, we prove    the result that  $\tilde{u}(\tau)$ is
smoothly dependent of $h$. This is true if the series
$\left\{\frac{\partial^{k}\tilde{u}_{n}}{\partial
h^{k}}(\tau)\right\}_{n=0}^{\infty}$  is  uniformly convergent for
$k\geq1$.
  Differentiating  \eqref{recur} with respect to $h$ yields
\begin{equation}\label{recur2}
\begin{aligned}
\frac{\partial \tilde{u}_{n+1}}{\partial h}(\tau)&=
\int_{0}^1\big(\int_{0}^\tau B(\tilde{u}_n(\alpha))
 P_{\alpha,\sigma}d\alpha\big) \nabla H(\tilde{u}_n(\sigma)) d \sigma\\
 &+h\int_{0}^1\big(\int_{0}^\tau B^{(1)}(\tilde{u}_n(\alpha))
\frac{\partial \tilde{u}_{n}(\alpha)}{\partial h}
 P_{\alpha,\sigma}d\alpha\big) \nabla H(\tilde{u}_n(\sigma)) d \sigma\\
  &+h\int_{0}^1\big(\int_{0}^\tau B(\tilde{u}_n(\alpha))
 \frac{\partial P_{\alpha,\sigma}}{\partial h}d\alpha\big) \nabla H(\tilde{u}_n(\sigma)) d \sigma\\
  &+h\int_{0}^1\big(\int_{0}^\tau B(\tilde{u}_n(\alpha))
 P_{\alpha,\sigma}d\alpha\big) \nabla H^{(1)}(\tilde{u}_n(\sigma)) \frac{\partial \tilde{u}_{n}(\sigma)}{\partial h}d \sigma. \end{aligned}
\end{equation}
Hence, we have
\begin{equation*}
\norm{\frac{\partial \tilde{u}_{n+1}}{\partial
h}}_{c}\leq\alpha+\beta\norm{\frac{\partial \tilde{u}_{n}}{\partial
h}}_{c}\quad \ \textmd{with}\ \ \alpha=A_0C_0D_0+\delta A_1C_0D_0,
\end{equation*}
which yields  that $\left\{\frac{\partial \tilde{u}_{n}}{\partial
h}(\tau)\right\}_{n=0}^{\infty}$ is uniformly bounded as follows:
\begin{equation*}\label{bound}
\norm{\frac{\partial \tilde{u}_{n}}{\partial
h}}_{c}\leq\alpha(1+\beta+\ldots+\beta^{n-1})\leq\frac{\alpha}{1-\beta}=C^{*},\quad
n=0,1,\ldots.
\end{equation*}
 It follows from \eqref{recur2}  that
\begin{equation*}
\begin{aligned}
 &\ \ \ \ \frac{\partial \tilde{u}_{n+1}}{\partial h}-\frac{\partial
\tilde{u}_{n}}{\partial h} = \int_{0}^1\int_{0}^\tau \Big[
B(\tilde{u}_n(\alpha))
 P_{\alpha,\sigma}  \nabla H(\tilde{u}_n(\sigma))-
B(\tilde{u}_{n-1}(\alpha))
 P_{\alpha,\sigma}  \nabla H(\tilde{u}_{n-1}(\sigma)) \Big]d\alpha d \sigma\\
 &+h\int_{0}^1\int_{0}^\tau\Big[ B^{(1)}(\tilde{u}_n(\alpha))
\frac{\partial \tilde{u}_{n}(\alpha)}{\partial h}
 P_{\alpha,\sigma}  \nabla H(\tilde{u}_n(\sigma))-  B^{(1)}(\tilde{u}_{n-1}(\alpha))
\frac{\partial \tilde{u}_{n-1}(\alpha)}{\partial h}
 P_{\alpha,\sigma}  \nabla H(\tilde{u}_{n-1}(\sigma)) \Big]d\alpha d \sigma\\
  &+h\int_{0}^1\int_{0}^\tau\Big[  B(\tilde{u}_n(\alpha))
 \frac{\partial P_{\alpha,\sigma}}{\partial h}  \nabla H(\tilde{u}_n(\sigma))-  B(\tilde{u}_{n-1}(\alpha))
 \frac{\partial P_{\alpha,\sigma}}{\partial h}  \nabla H(\tilde{u}_{n-1}(\sigma))\Big]d\alpha d \sigma\\
  &+h\int_{0}^1\int_{0}^\tau \Big[  B(\tilde{u}_n(\alpha))
 P_{\alpha,\sigma}  \nabla H^{(1)}(\tilde{u}_n(\sigma)) \frac{\partial \tilde{u}_{n}(\sigma)}{\partial h}-  B(\tilde{u}_{n-1}(\alpha))
 P_{\alpha,\sigma}  \nabla H^{(1)}(\tilde{u}_{n-1}(\sigma)) \frac{\partial \tilde{u}_{n-1}(\sigma)}{\partial h}\Big]d\alpha d
 \sigma.
\end{aligned}
\end{equation*}
By adding and removing some expressions and with careful
simplifications, we obtain
\begin{equation*}
\begin{aligned}
\norm{\frac{\partial \tilde{u}_{n+1}}{\partial h}-\frac{\partial
\tilde{u}_{n}}{\partial h}}_{c}\leq \gamma
\beta^{n-1}+\beta\norm{\frac{\partial \tilde{u}_{n}}{\partial
h}-\frac{\partial \tilde{u}_{n-1}}{\partial h}}_{c},
\end{aligned}
\end{equation*}
where
$$\gamma=(C_0A_0D_{1}+C_1A_0D_{0}+\delta C_0A_1D_{1}+\delta C_1A_1D_{0}+2 \delta C_1A_0D_{1}C^{*}
+ \delta  A_0D_{0}C^{*}M_2+ \delta C_0A_0C^{*}L_{2})R.$$ Here,
$L_{2}$ and $M_{2}$ are constants satisfying
\begin{equation*}\begin{aligned}
&||\nabla H^{(1)}(y)-\nabla H^{(1)}(z)||\leq
L_{2}||y-z||,\quad\text{for\ \ $y,z\in B(y_{0},R)$},\\
&||B^{(1)}(y)-B^{(1)}(z)||\leq M_{2}||y-z||,\qquad \  \text{for\ \
$y,z\in B(y_{0},R)$}.\end{aligned}
\end{equation*}
 Therefore,  by induction, it is true that
\begin{equation*}
\norm{\frac{\partial \tilde{u}_{n+1}}{\partial h}-\frac{\partial
\tilde{u}_{n}}{\partial h}}_{c}\leq
n\gamma\beta^{n-1}+\beta^{n}C^{*},\quad n=1,2,\ldots,
\end{equation*}
which confirms the uniform convergence of
 $\sum_{n=0}^{\infty}(\frac{\partial \tilde{u}_{n+1}}{\partial
h}(\tau)-\frac{\partial \tilde{u}_{n}}{\partial h}(\tau))$. Thus,
 $\left\{\frac{\partial \tilde{u}_{n}}{\partial
h}(\tau)\right\}_{n=0}^{\infty}$ is uniformly convergent.

 Similarly,  the uniform convergence  of  other function series
$\left\{\frac{\partial^{k}\tilde{u}_{n}}{\partial
h^{k}}(\tau)\right\}_{n=0}^{\infty}$ for $k\geq2$ can be shown  as
well. Therefore, $\tilde{u}(\tau)$ is  smoothly dependent on $h$.
\end{proof}

\section{Algebraic order}\label{sec order}
In this section, we study the algebraic order of the FFEP
integrator. To this end, we first need to show the regularity of the
integrators. Following \cite{Li_Wu(na2016)}, if an $h$-dependent
function $w(\tau)$ can be expanded as
\begin{equation*}
w(\tau)=\sum_{n=0}^{r-1}w^{[n]}(\tau)h^{n}+\mathcal{O}(h^{r}),
\end{equation*}
then $w(\tau)$  is called as regular, where
$w^{[n]}(\tau)=\frac{1}{n!}\frac{\partial^{n}w(\tau)}{\partial
h^{n}}|_{h=0}$ is a vector-valued function with polynomial entries
of degrees $\leq n$.

\begin{lem}\label{lemma1}
The FFEP  integrator \eqref{EDCr} gives a regular $h$-dependent
 function $\tilde{u}(\tau)$.
\end{lem}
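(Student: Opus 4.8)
The plan is to establish regularity of $\tilde{u}(\tau)$ by exploiting the recursive sequence $\{\tilde{u}_n(\tau)\}$ from \eqref{recur} introduced in the proof of Theorem \ref{eus}, showing inductively that each iterate is regular with a uniform bound on the relevant derivatives, and then passing the regularity to the limit $\tilde{u}(\tau) = \lim_{n\to\infty}\tilde{u}_n(\tau)$. Concretely, I would first observe that $\tilde{u}_0(\tau)\equiv y_0$ is trivially regular. Then I would argue by induction: assuming $\tilde{u}_n(\tau)$ is regular, the right-hand side of \eqref{recur} is an $h$-dependent expression built by composing the smooth maps $B$, $\nabla H$ with $\tilde{u}_n$, multiplying by the smooth-in-$h$ kernel $P_{\alpha,\sigma}$ (smoothness in $h$ was recalled from \cite{Li_Wu(na2016)} before Theorem \ref{eus}), integrating in $\alpha$ and $\sigma$, and multiplying by the explicit factor $h$ in front. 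Each of these operations preserves regularity, and the extra factor of $h$ shows that $\tilde{u}_{n+1}(\tau)-y_0 = \mathcal{O}(h)$; more importantly, a Taylor expansion in $h$ at $h=0$ produces coefficients $\tilde{u}_{n+1}^{[k]}(\tau)$ that are polynomials in $\tau$ of degree $\leq k$, because integrating $\int_0^\tau$ a polynomial of degree $\leq k-1$ raises the degree to $\leq k$ while adding one power of $h$.

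The key quantitative point I would nail down is that the expansion coefficients stabilise: I would show that for each fixed $k$ with $0\leq k\leq r-1$, the coefficient $\tilde{u}_n^{[k]}(\tau)$ is eventually independent of $n$ (in fact for $n\geq k$ or so), since the $h^k$-coefficient of the right-hand side of \eqref{recur} depends only on the coefficients $\tilde{u}_n^{[0]},\ldots,\tilde{u}_n^{[k-1]}$ of lower order. This gives a recursion on the coefficients themselves, $\tilde{u}^{[k]}(\tau) = \Phi_k\big(\tilde{u}^{[0]},\ldots,\tilde{u}^{[k-1]}\big)(\tau)$, with each $\tilde{u}^{[k]}(\tau)$ a polynomial of degree $\leq k$ in $\tau$. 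Combining this with the already-established facts (from Theorem \ref{eus}) that $\tilde{u}_n\to\tilde{u}$ uniformly and that $\partial^k\tilde{u}_n/\partial h^k$ converges uniformly, I would conclude $\tilde{u}^{[k]}(\tau) = \frac{1}{k!}\frac{\partial^k\tilde{u}(\tau)}{\partial h^k}\big|_{h=0}$ and hence
\begin{equation*}
\tilde{u}(\tau)=\sum_{n=0}^{r-1}\tilde{u}^{[n]}(\tau)h^{n}+\mathcal{O}(h^{r}),
\end{equation*}
with each $\tilde{u}^{[n]}(\tau)$ having polynomial entries of degree $\leq n$, which is exactly the definition of regularity.

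The main obstacle I anticipate is the bookkeeping needed to justify the uniformity of the $\mathcal{O}(h^r)$ remainder term across the iteration, i.e. making sure that the remainder in the Taylor expansion of $\tilde{u}_n$ is bounded uniformly in $n$ so that it survives the passage to the limit. This is where one must lean on the uniform bounds on $\|\partial^k\tilde{u}_n/\partial h^k\|_c$ established in the proof of Theorem \ref{eus} (the constants $C^\ast$ and its higher-order analogues), together with Taylor's theorem with integral remainder, to control $\|\tilde{u}_n(\tau) - \sum_{k=0}^{r-1}\tilde{u}_n^{[k]}(\tau)h^k\|$ by $C h^r$ with $C$ independent of $n$. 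A secondary technical point is verifying that the degree-$\leq k$ property of the coefficients is genuinely propagated through the $\int_0^\tau\!\cdot\,d\alpha$ operation paired with the power-of-$h$ accounting; this is the routine but essential calculation and I would state it as the inductive engine rather than expand it in full.
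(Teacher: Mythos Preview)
Your approach is correct but takes a more circuitous route than the paper's. The paper does not work through the iterates $\tilde{u}_n$ at all: since Theorem~\ref{eus} already establishes that the solution $\tilde{u}(\tau)$ itself is smooth in $h$, the Taylor expansion $\tilde{u}(\tau)=\sum_{m=0}^{r-1}\tilde{u}^{[m]}(\tau)h^{m}+\mathcal{O}(h^{r})$ is immediate, with the $\mathcal{O}(h^r)$ remainder controlled directly by the $r$-th $h$-derivative of $\tilde{u}$. The paper then substitutes this expansion (together with a Taylor expansion of $\nabla H$ about $y_0$) into the integral equation satisfied by $\tilde{u}$ and compares the coefficients of $h^{m+1}$ on both sides; the inductive step $\tilde{u}^{[m+1]}\in P_{m+1}^d$ follows from the regularity of $P_{\alpha,\sigma}$ (quoted from \cite{Li_Wu(na2016)}) and the fact that $\int_0^\tau$ raises polynomial degree by one. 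This is precisely the ``inductive engine'' you describe, but applied to $\tilde{u}$ rather than to $\tilde{u}_n$. By contrast, your plan establishes regularity of each iterate, argues that the coefficients $\tilde{u}_n^{[k]}$ stabilise in $n$, and then passes to the limit --- a valid strategy, and the obstacle you anticipate (uniform control of the remainder across $n$) is real and can indeed be handled via the uniform bounds on $\|\partial^k\tilde{u}_n/\partial h^k\|_c$ from Theorem~\ref{eus}. What the paper's route buys is economy: the limit-passing and uniform-remainder bookkeeping are absorbed into the smoothness statement already proved, so only the coefficient-degree induction remains. What your route would buy is a slightly more self-contained argument that makes the coefficient recursion explicit at the level of the iterates, at the cost of extra work.
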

\begin{proof}
It has been proved in Theorem \ref{eus} that $\tilde{u}(\tau)$ is
smoothly dependent  on $h$. Therefore, we can expand
$\tilde{u}(\tau)$ with respect to $h$ at zero as follows:
\begin{equation*}
\tilde{u}(\tau)=\sum_{m=0}^{r-1}\tilde{u}^{[m]}(\tau)h^{m}+\mathcal{O}(h^{r}).
\end{equation*}
Let $\delta=\tilde{u}(\sigma)-y_{0}$. We have
\begin{equation*}
\delta=
\tilde{u}^{[0]}(\sigma)-y_{0}+\mathcal{O}(h)=y_{0}-y_{0}+\mathcal{O}(h)=\mathcal{O}(h).
\end{equation*}
Expanding  $\nabla H(\tilde{u}(\sigma))$ at $y_{0}$ and inserting
the above equalities into  \eqref{method inte}  leads to
\begin{equation}\label{comp}
\begin{aligned} &
\sum_{m=0}^{r-1}\tilde{u}^{[m]}(\tau)h^{m} =y_0+h\int_{0}^1
\int_{0}^\tau P_{\alpha,\sigma} B(\tilde{u}(\alpha)) d\alpha
\sum_{n=0}^{r-1}\frac{1}{n!}\nabla H^{(n)}(y_{0})
 (\underbrace{\delta,\ldots,\delta}_{n-fold})d
 \sigma+\mathcal{O}(h^{r}).
\end{aligned}
\end{equation}

In what follows, we prove the following result by induction
$$\tilde{u}^{[m]}(\tau)\in
P_{m}^{d}=\underbrace{P_{m}([0,1])\times\ldots\times
P_{m}([0,1])}_{d-fold}\ \ \ \textmd{for}\  \ \ m=0,1,\ldots,r-1,$$
 where  $P_{m}([0,1])$ consists of polynomials of degrees $\leq m$ on
$[0,1]$.

Firstly, $\tilde{u}^{[0]}(\tau)=y_{0}\in P_{0}^{d}$. Assume that
$\tilde{u}^{[n]}(\tau)\in P_{n}^{d}$ for $n=0,1,\ldots,m$. Compare
the coefficients of $h^{m+1}$ on both sides of \eqref{comp} and then
we have
\begin{equation*}
\begin{aligned}
&\tilde{u}^{[m+1]}(\tau)= \sum_{k+n=m} \int_{0}^1 \int_{0}^\tau
\big[P_{\alpha,\sigma} B(\tilde{u}(\alpha))\big]^{[k]} d\alpha
 h_{n}(\sigma)d\sigma,\quad
h_{n}(\sigma)\in P_{n}^{d}.
\end{aligned}
\end{equation*}
Since $P_{\alpha,\sigma}$ is regular (see \cite{Li_Wu(na2016)}) and
$\tilde{u}^{[n]}(\tau)\in P_{n}^{d}$, it is easy to  verify that
$\big[P_{\alpha,\sigma} B(\tilde{u}(\alpha))\big]^{[k]}\in
P_{k}^{d\times d}$. Thus, under the condition $k+n=m$,  we have
$$\sum_{k+n=m} \int_{0}^1 \int_{0}^\tau \big[P_{\alpha,\sigma}
B(\tilde{u}(\alpha))\big]^{[k]} d\alpha
 h_{n}(\sigma)d\sigma\in
P^d_{m+1}.$$ Therefore, it is true  that
\begin{equation*}
\begin{aligned}
\tilde{u}^{[m+1]}(\tau)  \in P_{m+1}^{d}.\\
\end{aligned}
\end{equation*}
\end{proof}

The following result will be used in the analysis of algebraic
order.
\begin{lem} \label{lemma2}(See \cite{Li_Wu(na2016)})
Given a regular function $w$ and an $h$-independent  sufficiently
smooth  function $g$, the composition (if exists) is regular.
Moreover,  one has
\begin{equation*}
\mathcal{P}_{h}g(w(\tau))-g(w(\tau))=\mathcal{O}(h^{r}).
\end{equation*}
\end{lem}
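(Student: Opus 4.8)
The plan is to prove the two assertions in turn, re-using and slightly generalising the Taylor-expansion bookkeeping that already appears in the proof of Lemma~\ref{lemma1}, and then combining the second one with the defining property of $\mathcal{P}_{h}$.

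\emph{Regularity of the composition.} Since $w$ is regular, its leading term $w^{[0]}(\tau)$ has polynomial entries of degree $\le 0$, hence is a constant vector $c_{0}$, and $\delta(\tau):=w(\tau)-c_{0}=\sum_{n=1}^{r-1}w^{[n]}(\tau)h^{n}+\mathcal{O}(h^{r})=\mathcal{O}(h)$. Taylor-expanding the $h$-independent smooth map $g$ at $c_{0}$ and discarding all terms of order $k\ge r$ (legitimate because $\|\delta\|=\mathcal{O}(h)$) gives
\begin{equation*}
g(w(\tau))=\sum_{k=0}^{r-1}\frac{1}{k!}\,g^{(k)}(c_{0})\big(\underbrace{\delta(\tau),\ldots,\delta(\tau)}_{k\text{-fold}}\big)+\mathcal{O}(h^{r}).
\end{equation*}
Expanding the $k$-fold products, the coefficient of $h^{m}$ is a finite sum of entrywise products $w^{[n_{1}]}(\tau)\cdots w^{[n_{k}]}(\tau)$ with $n_{1}+\cdots+n_{k}=m$ and every $n_{i}\ge 1$, hence has polynomial entries of degree $\le m$ and is absent once $m\ge r$; as $g^{(k)}(c_{0})$ is a constant multilinear map it preserves this degree bound. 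Collecting powers of $h$ shows $g(w(\tau))=\sum_{m=0}^{r-1}[g(w)]^{[m]}(\tau)h^{m}+\mathcal{O}(h^{r})$ with $\deg [g(w)]^{[m]}\le m$, i.e.\ $g(w)$ is regular. This part is routine.

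\emph{Approximation of $g(w)$ by an element of $Y_{h}$.} The crux is to produce $\tilde v\in Y_{h}$ with $g(w(\tau))-\tilde v(\tau)=\mathcal{O}(h^{r})$. Taylor-expand the fitted basis, $\tilde\varphi_{i}(\tau)=\varphi_{i}(\tau h)=\sum_{j=0}^{r-1}c_{ij}\tau^{j}h^{j}+\mathcal{O}(h^{r})$ with $c_{ij}=\varphi_{i}^{(j)}(0)/j!$, the remainder being uniform for $\tau\in[0,1]$. Under the nondegeneracy hypotheses on $\{\varphi_{i}\}$ that underlie the framework of \cite{Li_Wu(na2016)}, the matrix $C=(c_{ij})$ is invertible, so for each $j$ the function $g_{j}(\tau):=\sum_{i}(C^{-1})_{ji}\tilde\varphi_{i}(\tau)\in Y_{h}$ satisfies $g_{j}(\tau)=\tau^{j}h^{j}+\mathcal{O}(h^{r})$. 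Writing $[g(w)]^{[m]}(\tau)=\sum_{j=0}^{m}\beta_{mj}\tau^{j}$ (permissible since the degree is $\le m$) and using, for $j\le m$, the identity $\tau^{j}h^{m}=h^{m-j}\big(\tau^{j}h^{j}\big)=h^{m-j}g_{j}(\tau)+\mathcal{O}(h^{r})$ together with the regularity just proved, one assembles
\begin{equation*}
g(w(\tau))=\sum_{j=0}^{r-1}\Big(\sum_{m=j}^{r-1}\beta_{mj}h^{m-j}\Big)g_{j}(\tau)+\mathcal{O}(h^{r})=:\tilde v(\tau)+\mathcal{O}(h^{r}),\qquad \tilde v\in Y_{h},
\end{equation*}
where the coefficients of $\tilde v$ are polynomials in $h$, hence bounded on $[0,1]$.

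\emph{Conclusion.} I would finish by invoking the defining property of $\mathcal{P}_{h}$. The projection is uniformly bounded in $h$ on $C[0,1]$: for $z=\sum_{i}\gamma_{i}\tilde\psi_{i}\in Y_{h}$, Cauchy--Schwarz and orthonormality give $|z(\tau)|^{2}\le\big(\sum_{i}\gamma_{i}^{2}\big)\big(\sum_{i}\tilde\psi_{i}(\tau)^{2}\big)=\|z\|^{2}P_{\tau,\tau}\le A_{0}\|z\|^{2}$, where $\|\cdot\|$ is the $\langle\cdot,\cdot\rangle$-norm; since $\mathcal{P}_{h}$ is the $\langle\cdot,\cdot\rangle$-orthogonal projection onto $Y_{h}$, this yields $\|\mathcal{P}_{h}f\|_{c}\le\sqrt{A_{0}}\,\|\mathcal{P}_{h}f\|\le\sqrt{A_{0}}\,\|f\|\le\sqrt{A_{0}}\,\|f\|_{c}$. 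Then $\mathcal{P}_{h}\tilde v=\tilde v$ and $\mathcal{P}_{h}\big(g(w)-\tilde v\big)=\mathcal{O}(h^{r})$ by this bound, whence $\mathcal{P}_{h}g(w(\tau))=\tilde v(\tau)+\mathcal{O}(h^{r})=g(w(\tau))+\mathcal{O}(h^{r})$, as claimed. The main obstacle is the middle step: making precise that the fitted space $Y_{h}$ contracts, as $h\to 0$, onto the graded polynomial space $\{\sum_{m}p_{m}h^{m}:\deg p_{m}\le m\}$ up to $\mathcal{O}(h^{r})$ — this is exactly where the nondegeneracy of $\{\varphi_{i}\}$ and the $h$-regularity of $P_{\tau,\sigma}$ established in \cite{Li_Wu(na2016)} are needed.
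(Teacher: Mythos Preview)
The paper does not give its own proof of this lemma: it is quoted from \cite{Li_Wu(na2016)} and used as a black box, so there is nothing in the present paper to compare your argument against.

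On its own merits your proof is sound and follows the natural route. The regularity of $g\circ w$ is the expected Taylor bookkeeping; your uniform bound $\|\mathcal{P}_{h}f\|_{c}\le\sqrt{A_{0}}\,\|f\|_{c}$, obtained from $P_{\tau,\tau}\le A_{0}$ and orthonormality of the $\tilde\psi_{i}$, is a clean way to pass from the $L^{2}$ projection to a pointwise estimate. The substantive step is the middle one, and you correctly isolate its hypothesis: invertibility of the Taylor-coefficient matrix $C=(\varphi_{i}^{(j)}(0)/j!)_{0\le i,j\le r-1}$. This is precisely the nondegeneracy condition assumed in the framework of \cite{Li_Wu(na2016)} (it is what makes $P_{\tau,\sigma}$ regular in $h$ and is implicitly in force throughout the present paper), so invoking it is legitimate rather than an extra assumption. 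One small point worth making explicit: in $\tau^{j}h^{m}=h^{m-j}g_{j}(\tau)+h^{m-j}\mathcal{O}(h^{r})$ the error term is $\mathcal{O}(h^{r+m-j})$, which is indeed $\mathcal{O}(h^{r})$ only because $m\ge j$; you state this inequality but the reader may appreciate seeing why it matters.
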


Before giving the algebraic order of the integrators, we recall the
following elementary theory of ordinary differential equations.
Denoting by $y(\cdot,\tilde{t}, \tilde{y})$ the solution of
$y'(t)=B(y(t)) \nabla H(y(t))$ satisfying the initial condition
$y(\tilde{t},\tilde{t}, \tilde{y})=\tilde{y}$ for any given
$\tilde{t}\in[0,h]$ and setting
\begin{equation*}
\Phi(s,\tilde{t}, \tilde{y})=\frac{\partial y(s,\tilde{t},
\tilde{y})}{\partial \tilde{y}},
\label{Phi}%
\end{equation*}
one has the   standard result
\begin{equation*}
\frac{\partial y(s,\tilde{t}, \tilde{y})}{\partial
\tilde{t}}=-\Phi(s,\tilde{t}, \tilde{y}) B(\tilde{y}) \nabla
H(\tilde{y}).
\end{equation*}

\begin{theo}\label{order}
The FFEP integrator \eqref{EDCr} is of order $2r$, which implies
\begin{equation*} \begin{aligned}
&\tilde{u}(1)-y(t_{0}+h)=\mathcal{O}(h^{2r+1}).\end{aligned}
\end{equation*}
Moreover, we have
\begin{equation*} \begin{aligned}
&\tilde{u}(\tau)-y(t_{0}+\tau h)=\mathcal{O}(h^{r+1}),\ \ \
0<\tau<1.\end{aligned}
\end{equation*}
\end{theo}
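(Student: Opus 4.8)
The plan is to follow the classical strategy for proving superconvergence of collocation-type methods, as in \cite{Hairer2010,Brugnano2012,Li_Wu(na2016)}: first establish the local (uniform in $\tau$) order $\mathcal{O}(h^{r+1})$ via a Gr\"onwall-type comparison between $\tilde{u}(\tau)$ and the exact flow, and then upgrade the order at the endpoint $\tau=1$ to $\mathcal{O}(h^{2r+1})$ by exploiting the orthogonality built into the projection $\mathcal{P}_h$. For the first part I would introduce the defect (residual) obtained by inserting the exact solution $y(t_0+\tau h)$ into the defining equation \eqref{EDCr}; writing $y'(t_0+\tau h)=h B(y)\nabla H(y) = h B(y)\mathcal{P}_h(\nabla H(y)) + hB(y)\big(\nabla H(y)-\mathcal{P}_h(\nabla H(y))\big)$, Lemma \ref{lemma2} together with Lemma \ref{lemma1} (regularity of $\tilde{u}$, hence of the relevant compositions) gives that this defect is $\mathcal{O}(h^{r+1})$ uniformly on $[0,1]$. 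Subtracting the integrated form of \eqref{EDCr} from the integrated defect relation and using the Lipschitz bounds $C_n,D_n,L_2,M_2$ from Section \ref{existence} in a discrete Gr\"onwall argument yields $\|\tilde{u}(\tau)-y(t_0+\tau h)\|=\mathcal{O}(h^{r+1})$ for all $\tau\in[0,1]$, which is the second assertion of the theorem.

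For the endpoint superconvergence I would use the variation-of-constants / adjoint trick recalled just before the statement. Write
\begin{equation*}
\tilde{u}(1)-y(t_0+h)=\int_0^1 \frac{d}{d\tau}\, y\big(t_0+h,\, t_0+\tau h,\, \tilde{u}(\tau)\big)\, d\tau,
\end{equation*}
since at $\tau=0$ the integrand equals $y(t_0+h,t_0,y_0)=y(t_0+h)$ and at $\tau=1$ it equals $\tilde{u}(1)$. Differentiating with the chain rule and using $\partial_{\tilde t}\, y(s,\tilde t,\tilde y)=-\Phi(s,\tilde t,\tilde y)B(\tilde y)\nabla H(\tilde y)$ together with $\tilde{u}'(\tau)=hB(\tilde u(\tau))\mathcal{P}_h(\nabla H(\tilde u(\tau)))$ gives
\begin{equation*}
\tilde{u}(1)-y(t_0+h)=h\int_0^1 \Phi\big(t_0+h,t_0+\tau h,\tilde{u}(\tau)\big) B(\tilde u(\tau))\Big(\mathcal{P}_h\big(\nabla H(\tilde u(\tau))\big)-\nabla H(\tilde u(\tau))\Big)\, d\tau.
\end{equation*}
Now the key point is that the factor $g(\tau):=\Phi(t_0+h,t_0+\tau h,\tilde u(\tau)) B(\tilde u(\tau))$, acting row-wise, is itself a regular $h$-dependent function of $\tau$ (it is a composition of the smooth flow-derivative with the regular function $\tilde u$, in the sense of Lemma \ref{lemma2}). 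Therefore $g(\tau)=\sum_{j=0}^{r-1} g^{[j]}(\tau)h^j+\mathcal{O}(h^r)$ with each $g^{[j]}(\tau)$ polynomial of degree $\le j\le r-1$, i.e.\ $g^{[j]}(\tau)$ lies (entry-wise, in $\tau$) in the space spanned by the $\tilde\varphi_i$ restricted to... more precisely, I would need that polynomials of degree $\le r-1$ belong to the "test space'' $Y_h$ against which $\mathcal{P}_h$ projects. When $Y_h$ is generated by shifted Legendre polynomials this is exact; for a general fitting space $Y_h$ one argues as in \cite{Li_Wu(na2016)} that $Y_h$ contains the relevant polynomial part up to $\mathcal{O}(h^r)$ corrections because $\tilde\varphi_i(\tau)=\varphi_i(\tau h)=\varphi_i(0)+\varphi_i'(0)\tau h+\cdots$, so $Y_h$ "looks like'' the polynomial space to order $r$. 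Hence for each fixed $j\le r-1$, $\langle g^{[j]}(\tau),\,\mathcal{P}_h(\nabla H(\tilde u))-\nabla H(\tilde u)\rangle_\tau=0$ up to $\mathcal{O}(h^{r})$ by the defining property \eqref{DEF} of $\mathcal{P}_h$ (the second argument is $\mathcal{O}(h^r)$ to begin with, and its pairing with any element of $Y_h$ vanishes exactly). Combining: the integral $\int_0^1 g(\tau)\big(\mathcal{P}_h(\nabla H(\tilde u))-\nabla H(\tilde u)\big)d\tau$ is a sum of $r$ terms each contributing $h^j\cdot\mathcal{O}(h^{r})=\mathcal{O}(h^{r})$ for $j=0,\dots,r-1$ plus the tail $\mathcal{O}(h^r)\cdot\mathcal{O}(h^r)$; the pairing of the polynomial parts with the $\mathcal{O}(h^r)$ remainder is $\mathcal{O}(h^r)$, so the whole integral is $\mathcal{O}(h^{r})$, and with the prefactor $h$ we would so far only get $\mathcal{O}(h^{r+1})$ — which is not yet enough. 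The sharper count, which is the heart of the argument, is that the \emph{leading} $r$ coefficient functions $g^{[0]},\dots,g^{[r-1]}$ are polynomials of degree $\le r-1$ hence lie in the test space, so their pairing with $\mathcal{P}_h(\nabla H(\tilde u))-\nabla H(\tilde u)$ is \emph{exactly} zero (not merely $\mathcal{O}(h^r)$) by \eqref{DEF}; only the remainder $g(\tau)-\sum_{j<r}g^{[j]}(\tau)h^j=\mathcal{O}(h^r)$ contributes, paired against a quantity that is itself $\mathcal{O}(h^r)$, giving $\mathcal{O}(h^{2r})$, and the prefactor $h$ then yields the claimed $\mathcal{O}(h^{2r+1})$.

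The main obstacle, and the step I would spend the most care on, is precisely making the "polynomials of degree $\le r-1$ lie in $Y_h$ exactly'' claim rigorous for a \emph{general} fitting space $Y_h$: unlike the Legendre case, a generic $\mathrm{span}\{\tilde\varphi_0,\dots,\tilde\varphi_{r-1}\}$ need not contain $1,\tau,\dots,\tau^{r-1}$ exactly. The resolution — following \cite{Li_Wu(na2016)} — is to exploit that the orthogonality \eqref{DEF} only needs to kill the pairing of the \emph{low-order-in-$h$ part} of $g$ with $\nabla H(\tilde u)-\mathcal{P}_h(\nabla H(\tilde u))$, and that the latter difference, being $\mathcal{O}(h^r)$ and orthogonal to \emph{all} of $Y_h$, combined with the fact that $g^{[j]}(\tau)$ (a degree-$\le j$ polynomial) differs from an element of $Y_h$ only by $\mathcal{O}(h^{r-j})$ terms, still produces the needed cancellation order by order: the $j$-th term contributes $h^j\cdot(\text{element of }Y_h)\cdot\mathcal{O}(h^r) + h^j\cdot\mathcal{O}(h^{r-j})\cdot\mathcal{O}(h^r)=0+\mathcal{O}(h^{r+\max(j,r-j)})\subseteq\mathcal{O}(h^{2r})$... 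I would need to organize this bookkeeping so that every one of the $r$ terms lands in $\mathcal{O}(h^{2r})$, after which the $h$ prefactor closes the proof. An alternative, cleaner route for the general case is to transfer the whole estimate to the Legendre-generated reference method via the fact (Lemma \ref{lemma1} and the regularity machinery) that $\tilde u(\tau)$ and the corresponding Legendre collocation solution agree to $\mathcal{O}(h^{r+1})$, but one must then check the $\mathcal{O}(h^{2r+1})$ claim is not spoiled, so I expect the direct order-by-order argument above to be the safer presentation.
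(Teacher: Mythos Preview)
Your overall strategy --- the flow-derivative integral representation of the error, followed by an orthogonality argument --- is exactly what the paper does. The paper arrives at the same expression
\[
\tilde u(1)-y(t_0+h)=-h\int_0^1 \Phi^1(\alpha)\,B(\tilde u(\alpha))\big(\nabla H(\tilde u(\alpha))-\mathcal{P}_h\nabla H(\tilde u(\alpha))\big)\,d\alpha.
\]

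Where you diverge, and where you create your own ``main obstacle'', is the orthogonality step. You expand $g(\alpha)=\Phi^1(\alpha)B(\tilde u(\alpha))$ in powers of $h$ and try to argue term by term that the polynomial coefficients $g^{[j]}$ lie in (or are close to) $Y_h$. For a general fitting space this is not obviously true --- your claim that $g^{[j]}$ differs from an element of $Y_h$ by $\mathcal{O}(h^{r-j})$ is unjustified, and your own hedging shows you sense this. The alternative route via comparison with the Legendre method would not close either, since an $\mathcal{O}(h^{r+1})$ difference between the two numerical solutions cannot be upgraded to $\mathcal{O}(h^{2r+1})$ without essentially redoing the argument.

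The paper's resolution is much simpler and dissolves the obstacle entirely: apply Lemma~\ref{lemma2} not only to $\nabla H(\tilde u)$ but also, row by row, to $g$ itself. Since $g$ is a smooth composition with the regular $\tilde u$ (Lemma~\ref{lemma1}), Lemma~\ref{lemma2} gives
\[
g_i(\alpha)=\mathcal{P}_h g_i(\alpha)+\mathcal{O}(h^r),\qquad i=1,\dots,d.
\]
Now $\mathcal{P}_h g_i\in Y_h$ \emph{by construction of the projection}, so the defining property \eqref{DEF} yields
\[
\int_0^1 \big(\mathcal{P}_h g_i(\alpha)\big)^{\intercal}\big(\nabla H(\tilde u(\alpha))-\mathcal{P}_h\nabla H(\tilde u(\alpha))\big)\,d\alpha=0
\]
exactly --- no need for any polynomial to sit inside $Y_h$. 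What remains is the $\mathcal{O}(h^r)$ tail of $g$ paired with the $\mathcal{O}(h^r)$ projection error, giving $\mathcal{O}(h^{2r})$; the prefactor $h$ finishes the job. In short: project $g$ onto $Y_h$, do not expand it in $h$.

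For the uniform $\mathcal{O}(h^{r+1})$ bound your defect-plus-Gr\"onwall argument is a valid alternative. The paper instead reuses the same flow-derivative integral with upper limit $\tau$ in place of $1$, obtaining
\[
\tilde u(\tau)-y(t_0+\tau h)=-h\int_0^\tau \Phi^\tau(\alpha)\,B(\tilde u(\alpha))\cdot\mathcal{O}(h^r)\,d\alpha=\mathcal{O}(h^{r+1}),
\]
with no orthogonality invoked (the domain $[0,\tau]$ does not match the inner product on $[0,1]$). Both routes work; the paper's is one line.
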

\begin{proof}
 According to the previous preliminaries,
 we obtain
\begin{equation*}\label{stage}
\begin{aligned}
&\tilde{u}(1)-y(t_{0}+  h)=y(t_{0}+  h,t_{0}+  h,\tilde{u}(1))-y(t_{0}+  h,t_{0},y_{0})\\
&=\int_{0}^{1}\frac{d}{d\alpha}y(t_{0}+  h,t_{0}+\alpha h,\tilde{u}(\alpha))d\alpha\\
&=\int_{0}^{1}\Big(h\frac{\partial y}{\partial\tilde{t}}(t_{0}+
h,t_{0}+\alpha h,\tilde{u}(\alpha))+
\frac{\partial y}{\partial\tilde{y}}(t_{0}+  h,t_{0}+\alpha h,\tilde{u}(\alpha))h\tilde{u}^{\prime}(\alpha)\Big)d\alpha\\
&=\int_{0}^{1}\Big(-h\frac{\partial y}{\partial\tilde{y}}(t_{0}+
h,t_{0}+\alpha
h,\tilde{u}(\alpha))B(\tilde{u}(\alpha)) \nabla H(\tilde{u}(\alpha)) \\
&\quad+ \frac{\partial y}{\partial\tilde{y}}(t_{0}+ h,t_{0}+\alpha
h,\tilde{u}(\alpha))hB(\tilde{u}(\alpha)) \mathcal{P}_{h}\nabla H(\tilde{u}(\alpha))\Big)d\alpha\\
&=-h\int_{0}^{1}\Phi^{1}(\alpha)B(\tilde{u}(\alpha))\big(\nabla
H(\tilde{u}(\alpha))-\mathcal{P}_{h} \nabla H( \tilde{u} (\alpha))
\big)d\alpha,
\end{aligned}
\end{equation*}
where
$$\Phi^{1}(\alpha)=\frac{\partial y}{\partial\tilde{y}}(t_{0}+\ h,t_{0}+\alpha h,\tilde{u}(\alpha)).$$
From  Lemmas \ref{lemma1} and \ref{lemma2}, it follows that
\begin{equation*}\label{error}
\mathcal{P}_{h}\nabla H(\tilde{u}(\tau))-\nabla
H(\tilde{u}(\tau))=\mathcal{O}(h^{r}).
\end{equation*}
Partition the matrix-valued function $\Phi^{1}(\alpha)$  as
$\Phi^{1}(\alpha)=(\Phi_{1}^{1}(\alpha),\ldots,\Phi_{d}^{1}(\alpha))^{\intercal}$
and then it follows from Lemma \ref{lemma1} that
\begin{equation*}\label{pre1}
\Phi_{i}^{1}(\alpha)=\mathcal{P}_{h}\Phi_{i}^{1}(\alpha)+\mathcal{O}(h^{r}),\quad
i=1,2,\ldots,d.
\end{equation*}
Since $\mathcal{P}_{h}\Phi_{i}^{1}(\alpha)\in Y_h$, we have
\begin{equation*}
\begin{aligned}
&\int_{0}^{1}(\mathcal{P}_{h}\Phi_{i}^{1}(\alpha))^{\intercal}\nabla
H(\tilde{u}(\alpha))
d\alpha=\int_{0}^{1}(\mathcal{P}_{h}\Phi_{i}^{1}(\alpha))^{\intercal}\mathcal{P}_{h}
\nabla H( \tilde{u} (\alpha))d\alpha,\quad i=1,2,\ldots,d.
\end{aligned}\end{equation*}
 Therefore, one arrives at
\begin{equation*}
\begin{aligned}
&\tilde{u}(1)-y(t_{0}+h)
= -h\int_{0}^{1}\left(\left(\begin{array}{c}(\mathcal{P}_{h}\Phi_{1}^{1}(\alpha))^{\intercal}\\
\vdots\\(\mathcal{P}_{h}\Phi_{d}^{1}(\alpha))^{\intercal}\end{array}\right)
+\mathcal{O}(h^{r})\right)\big(\nabla
H(\tilde{u}(\alpha))-\mathcal{P}_{h} \nabla H( \tilde{u} (\alpha))\big)d\alpha\\
=&-h\int_{0}^{1}\left(\begin{array}{c}(\mathcal{P}_{h}\Phi_{1}^{1}(\alpha))^{\intercal}\big(\nabla
H(\tilde{u}(\alpha))-\mathcal{P}_{h} \nabla H( \tilde{u} (\alpha))\big)\\
\vdots\\(\mathcal{P}_{h}\Phi_{d}^{1}(\alpha))^{\intercal}\big(\nabla
H(\tilde{u}(\alpha))-\mathcal{P}_{h} \nabla H( \tilde{u}
(\alpha))\big)\end{array}\right)d\alpha
-h\int_{0}^{1}\mathcal{O}(h^{r})\times\mathcal{O}(h^{r})d\alpha\\
=&0+\mathcal{O}(h^{2r+1})=\mathcal{O}(h^{2r+1}).\\
\end{aligned}
\end{equation*}
 Likewise,  we deduce that
\begin{equation*}
\begin{aligned}
&\tilde{u}(\tau)-y(t_{0}+  \tau
h)=y(t_{0}+\tau h,t_{0}+\tau h,\tilde{u}(\tau))-y(t_{0}+\tau h,t_{0},y_{0})\\
&=-h\int_{0}^{\tau}\Phi^{\tau}(\alpha)B(\tilde{u}(\alpha))\big(\nabla
H(\tilde{u}(\alpha))-\mathcal{P}_{h} \nabla H( \tilde{u} (\alpha))
\big)d\alpha\\
&=-h\int_{0}^{\tau}\Phi^{\tau}(\alpha)B(\tilde{u}(\alpha))\mathcal{O}(h^{r})d\alpha=\mathcal{O}(h^{r+1}).
\end{aligned}
\end{equation*}
\end{proof}


\section{Practical  FFEP integrators}\label{methods}
In this  section,  we present two   illustrative examples of the new
  FFEP integrators.

\vskip2mm\textbf{Example 1.} Choose $$\varphi_k(t)=t^k,\ \ \
k=0,1,\cdots,r-1$$ for the function spaces $X$ and $Y$,  and then
one gets that
\begin{equation*}\label{l func}\hat{l}_{i}(\tau)=\prod_{j=1,j\neq i}^{r}\frac{\tau-\hat{d}_j}{\hat{d}_i-\hat{d}_j},\ \ \
i=1,2\ldots,r.\end{equation*}
  Using  the
Gram-Schmide process, we obtain the standard orthonormal basis
 of $Y_{h}$ as
\begin{equation*}
\hat{p}_j(t)=(-1)^j\sqrt{2j+1}\sum\limits_{k=0}^ {j}{j
\choose{k}}{j+k \choose{k}}(-t)^k,\qquad j=0,1,\ldots,r-1,\qquad
t\in[0,1],
\end{equation*}
which are the shifted Legendre polynomials   on $[0,1]$.  Therefore,
$P_{\tau,\sigma}$ can be determined  by
\begin{equation*}\label{Plimit} P_{\tau,\sigma}
=\sum_{i=0}^{r-1}\hat{p}_{i}(\tau)\hat{p}_{i}(\sigma).
\end{equation*}
In this situation, the FFEP integrator \eqref{EDCr} becomes  the EP
  method   given by Cohen and Hairer
\cite{Hairer2010}  and   Brugnano et al.  \cite{Brugnano2012}.

 As an example, if we choose $r=1$ and $\hat{d}_{1}=1/2$, one has
$$\hat{l}_{1}(\tau)=1,\ \hat{p}_0(t)=1, $$
and $P_{\tau,\sigma}=1.$ The integrator \eqref{PEEPCr-2} becomes
\begin{equation}\label{m1}
\left\{\begin{aligned}& y_{\tau} =y_0+h \tau B(y_{\frac{1}{2}})
\int_{0}^1
 \nabla H(y_\sigma)d
\sigma,\\
& y_{1} =y_0+h  B(y_{\frac{1}{2}})\int_{0}^1   \nabla H(y_\sigma) d
\sigma,\\
\end{aligned}\right.
\end{equation}
 which leads to
\begin{equation*}
 \begin{aligned}y_{\tau} =y_0+h \tau B(y_{\frac{1}{2}})
\int_{0}^1
 \nabla H(y_\sigma)d
\sigma=y_0+\tau (y_1-y_0).
\end{aligned}
\end{equation*}
Letting $\tau=1/2$ for the first formula of \eqref{m1}  gives
\begin{equation*}
 \begin{aligned}& y_{\frac{1}{2}} =y_0+\frac{1}{2}h  B(y_{\frac{1}{2}}) \int_{0}^1
 \nabla H(\tilde{u}(\sigma)d
 \sigma=y_0+\frac{y_1-y_0}{2}=\frac{y_1+y_0}{2}.
\end{aligned}
\end{equation*}
Thus, we obtain
\begin{equation*}
 \begin{aligned}y_{1} =y_0+h  B(\frac{y_1+y_0}{2})\int_{0}^1   \nabla H(y_0+\sigma (y_1-y_0)) d
\sigma.
\end{aligned}
\end{equation*}
This second-order integrator has been given by  Cohen and Hairer in
\cite{Cohen-2011}.

\vskip2mm\textbf{Example  2.} We consider   another choice for $Y$
by
\begin{equation*}\label{FF} Y=\text{span}\left\{ \cos(\omega
t)\right\},
\end{equation*}  and then we get
\begin{equation*}\begin{aligned}\label{l func}&\hat{l}_{1}(\tau)= \frac{\cos(t v )}{\cos(\hat{d}_1v)},\
\ \ P_{\tau,\sigma}=\frac{4 v \cos(v \sigma) \cos(v \tau)}{2 v +
\sin(2v)},
\end{aligned}\end{equation*}
 where $v=\omega h.$
Under this choice, the integrator \eqref{PEEPCr-2} becomes
  \begin{equation*}
\left\{\begin{aligned}& y_{\tau} =y_0+h \int_{0}^{\tau}
\hat{l}_{1}(\alpha)d\alpha B(y_{\hat{d}_{1}}) \int_{0}^1
P_{\hat{d}_{1},\sigma} \nabla H(y_\sigma) d
\sigma,\\
& y_{1} =y_0+h \int_{0}^1 \hat{l}_{1}(\alpha)d\alpha
B(y_{\hat{d}_{1}})\int_{0}^1 P_{\hat{d}_{1},\sigma} \nabla
H(y_\sigma) d
\sigma.\\
\end{aligned}\right.
\end{equation*}
 The choice of  $\tau=\hat{d}_{1}=1/2$ yields
\begin{equation*}
 \begin{aligned} y_{1/2} =y_0+h \frac{\tan(v/2 )}{v} B(y_{1/2}) \int_{0}^1 P_{1/2,\sigma}
\nabla H(y_\sigma) d \sigma,\\
y_{1} =y_0+h \frac{2\sin(v/2 )}{v} B(y_{1/2}) \int_{0}^1
P_{1/2,\sigma} \nabla H(y_\sigma) d \sigma.
\end{aligned}\end{equation*} Therefore, using these two formulae,
we obtain
\begin{equation*}
 \begin{aligned}&y_{1/2} =y_0+  \frac{\tan(v/2 )}{v}\frac{v(y_{1}-y_{0})}{2\sin(v/2 )}=y_0+  \frac{1}{2\cos(v/2 )}
 (y_{1}-y_{0}),\\
&y_{\tau} =y_0+   \frac{\sin(v \tau )}{v\cos(v/2
)}\frac{v(y_{1}-y_{0})}{2\sin(v/2 )}=y_0+   \frac{\sin(v \tau
)}{\sin(v)}  (y_{1}-y_{0}),
\end{aligned}\end{equation*} which  leads to
\begin{equation*}
 \begin{aligned}y_{1} =&y_0+h \frac{2\sin(v/2 )}{v} B\Big(y_0+  \frac{y_{1}-y_{0}}{2\cos(v/2 )}
 \Big) \int_{0}^1 P_{1/2,\sigma} \nabla H\Big(y_0+   \frac{\sin(v
\sigma )}{\sin(v)} (y_{1}-y_{0})\Big) d \sigma.
\end{aligned}
\end{equation*}
It can be observed that when $v=0$, this scheme reduces to
\eqref{EXAVF}. This second-order  integrator is denoted by FFEP1.

\begin{rem}
It is noted that one can make  different choices of  $Y$ and $X$ and
  different practical integrators can be  derived. We do not  pursue  further on this point   for brevity.
\end{rem}

\section{Numerical experiments}\label{Numerical experiments}
In order to  show the efficiency and robustness of the new
integrators, we apply our integrator FFEP1 to the Euler equation.
For comparison, we consider the second-order EP collocation method
\eqref{EXAVF} given in \cite{Cohen-2011} and denote it by EPCM1.
Moreover, we choose the
 following second-order  trigonometrically-fitted
EP method \begin{equation*}
 \begin{aligned}y_{1} =y_0+h \frac{2\sinh(v/2 )}{v\cosh(v/2 )} B\big(\frac{y_1+y_0}{2}\big)\int_{0}^1   \nabla H(y_0+\sigma (y_1-y_0)) d
\sigma,
\end{aligned}
\end{equation*}
which was given in   \cite{Miyatake2015}. We denote it by TFEP1. It
is noted  that these three methods are  all implicit and fixed-point
iteration will be used. We
  set  $10^{-16}$  as the error tolerance
 and $10$ as the maximum number of each iteration.

 The following Euler equation  has been considered in
\cite{Calvo2009,Miyatake2015}:
\begin{equation*}
 \begin{aligned}\dot{y}=\big((\alpha-\beta)y_2y_3,(1-\alpha)y_3y_1,(\beta-1)y_1y_2\big)^{\intercal},
 \ \ t\in[0,T],
\end{aligned}
\end{equation*}
 which describes the motion of a rigid
body under no forces. This system can be  written  as a Poisson
system
\begin{equation*}
 \begin{aligned}\dot{y}= \left(
                           \begin{array}{ccc}
                             0 & \alpha y_3 & -\beta y_2 \\
                             - \alpha y_3 & 0 & y_1 \\
                             \beta y_2 & y_1 & 0 \\
                           \end{array}
                         \right)\nabla H(y)
\end{aligned}
\end{equation*}
with
$$H(y)=\frac{y_1^2+y_2^2+y_3^2}{2}.$$
Following \cite{Calvo2009,Miyatake2015}, the initial value is chosen
as $y(0) = (0, 1, 1)$, and the parameters  are given by
$\alpha=1+\frac{1}{\sqrt{1.51}},\ \beta=1-\frac{0.51}{\sqrt{1.51}}$.
The exact solution is given by
$$y(t) = (\sqrt{1.51}\textmd{sn}(t, 0.51), \textmd{cn}(t, 0.51), \textmd{dn}(t, 0.51))^{\intercal},$$ where
$\textmd{sn}, \textmd{cn}, \textmd{dn}$ are the Jacobi elliptic
functions. This solution is periodic with the period $$T_{p} =
7.450563209330954,$$ and thence we consider choosing
$\omega=2\pi/T_{p}$ for the methods FFEP1 and TFEP1. We integrate
this problem with
 the stepsizes $h=0.5$ and  $h=0.2$ in the interval $[0, 10000].$ See Figure
\ref{p1-2}  for the energy conservation for different methods. We
then solve the problem in the interval $[0, T]$ with different
stepsizes $h= 0.1/2^{i}$ for $i=4,5,6,7.$  The   global errors are
presented in Figure \ref{p1-1} for $T=10,100$.


\begin{figure}[ptb]
\centering
\includegraphics[width=6cm,height=8cm]{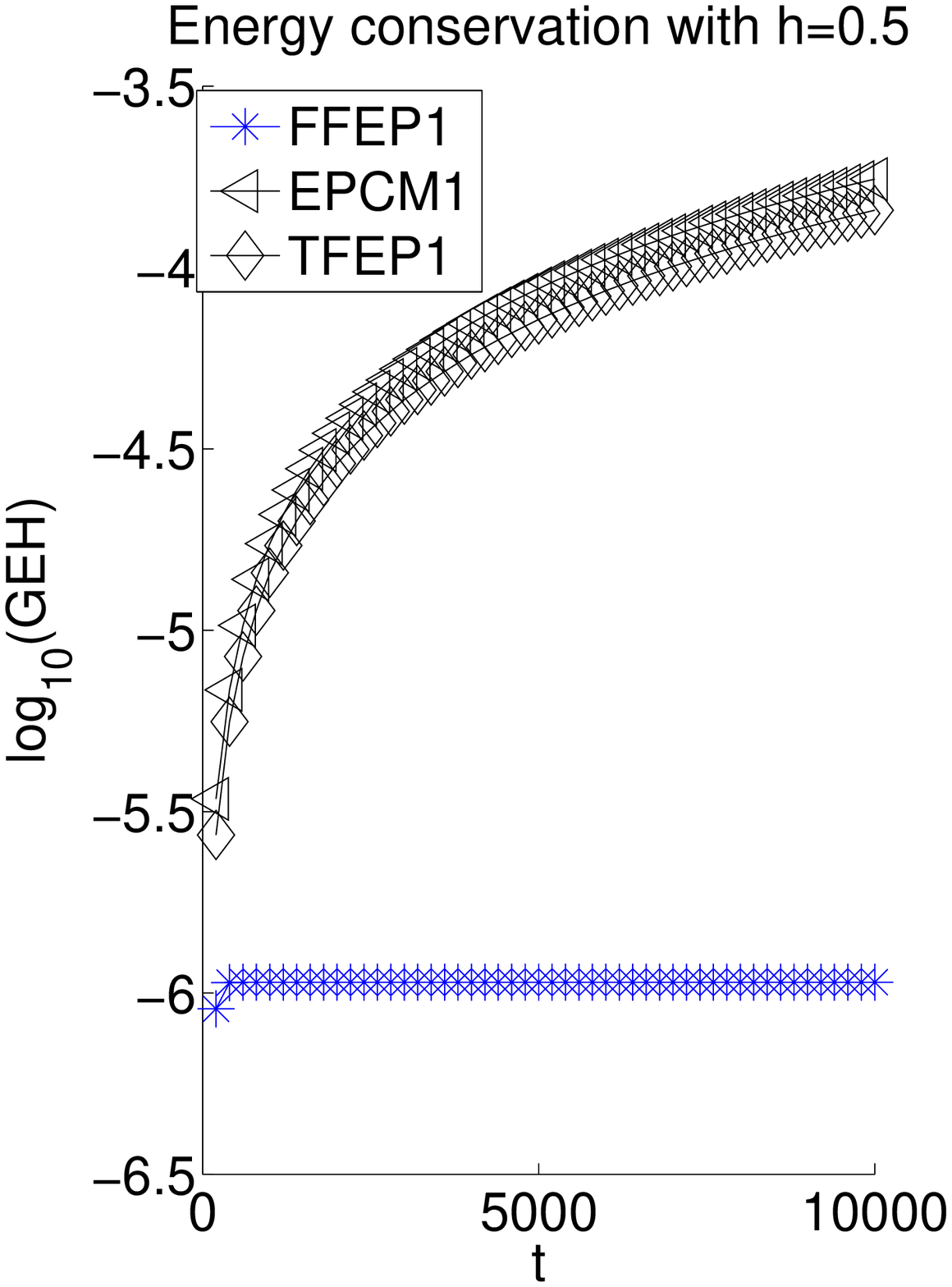}
\includegraphics[width=6cm,height=8cm]{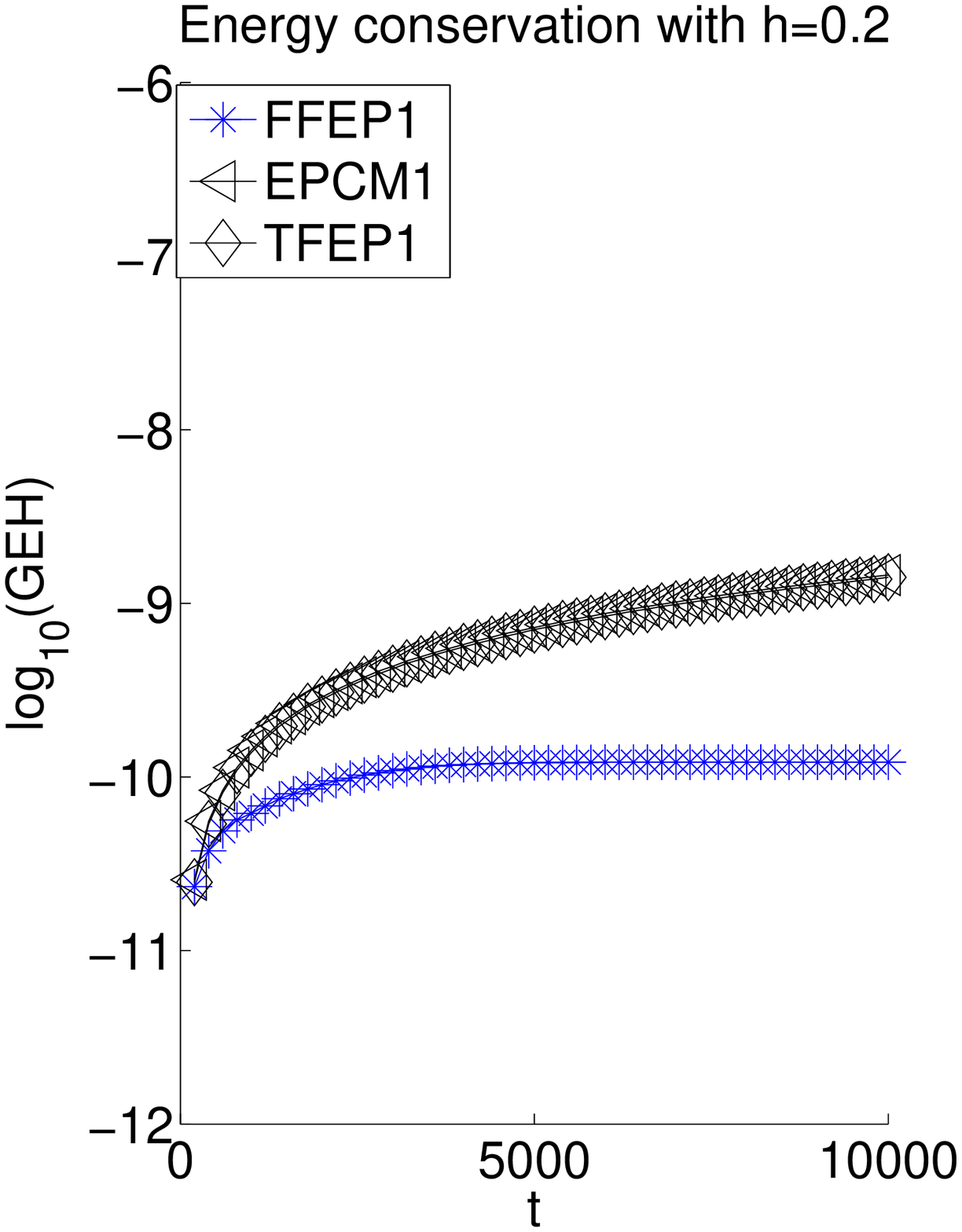}
\caption{  The logarithm of the  error of Hamiltonian against  $t$.
} \label{p1-2}
\end{figure}

 \begin{figure}[ptb]
\centering
\includegraphics[width=6cm,height=8cm]{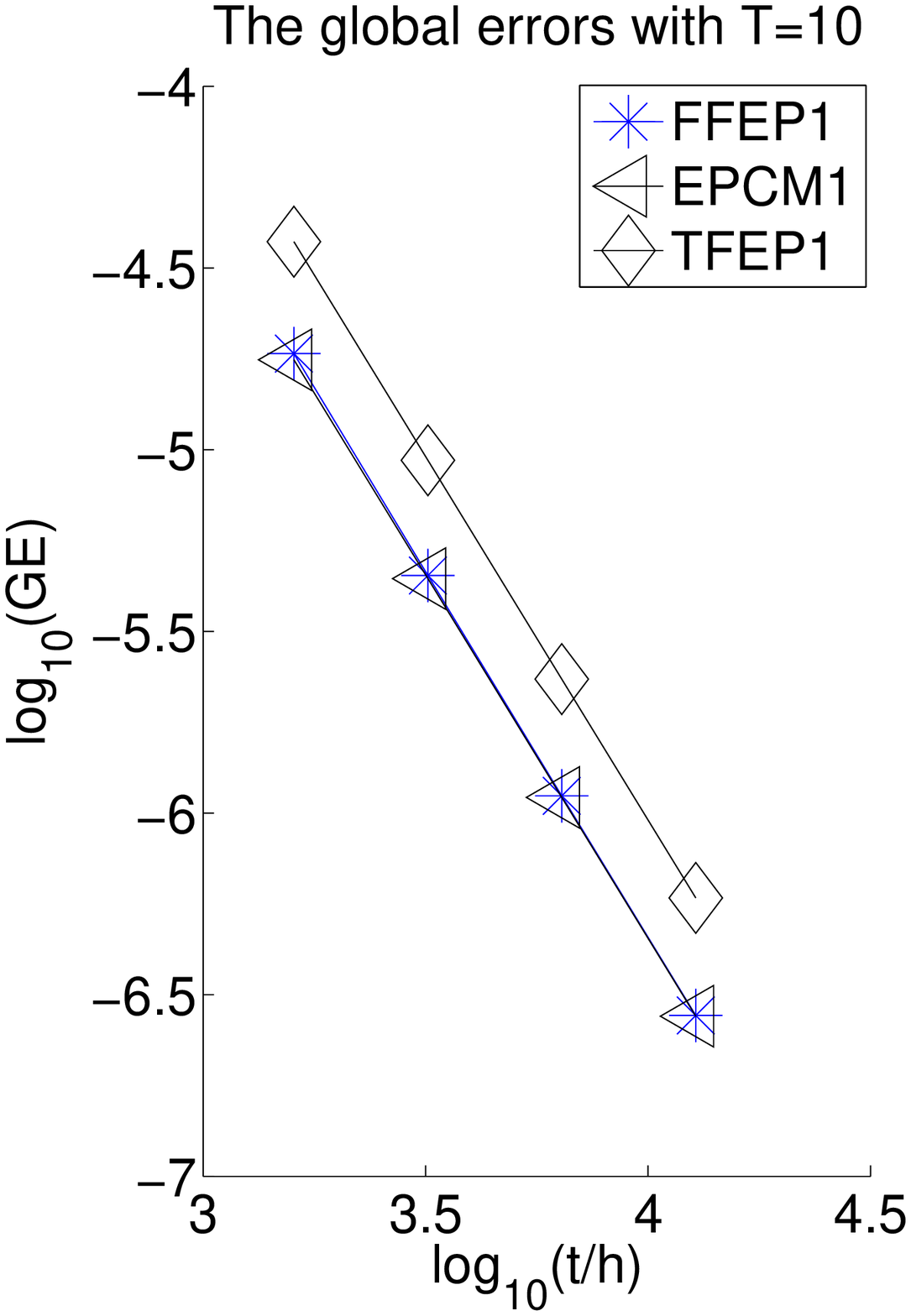}
\includegraphics[width=6cm,height=8cm]{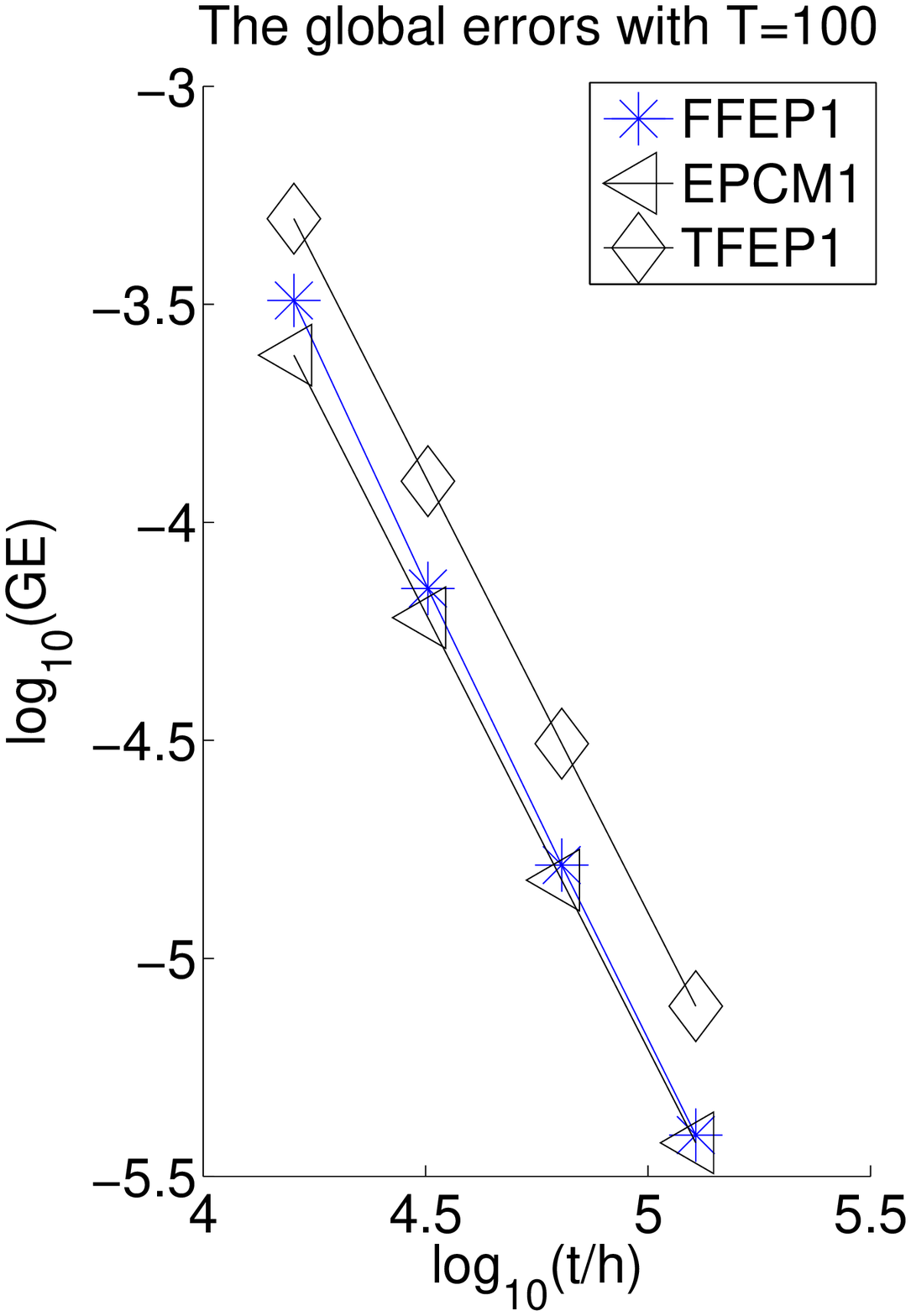}
\caption{ The logarithm of the  global error against the logarithm
of $t/h$.  } \label{p1-1}
\end{figure}

 We also consider a more anomalous case. As mentioned in
\cite{Miyatake2015},  when $\beta\approx 1$, it is  expected that
$\dot{y}_3\approx 0$ and thus $y_3(t) \approx 1.$ Therefore, the
variables $y_1$ and $y_2$ seem to behave like harmonic oscillator
with  the  period $T_{p} = 2\pi/(\alpha- 1).$ We choose $\alpha =
51$ and $\beta = 1.01$, which means that $\omega=50$.  We integrate
this problem with  $h=0.5$ and  $h=0.2$ in the interval $[0,
10000].$ The energy conservation for different methods are shown in
 Figure \ref{p1-4}. Then the problem is solved in the interval $[0, T]$ with
 $h= 0.1/2^{i}$ for $i=4,5,6,7,$ and see Figure \ref{p1-3} for the   global errors of $T=10,20$.

\begin{figure}[ptb]
\centering
\includegraphics[width=6cm,height=8cm]{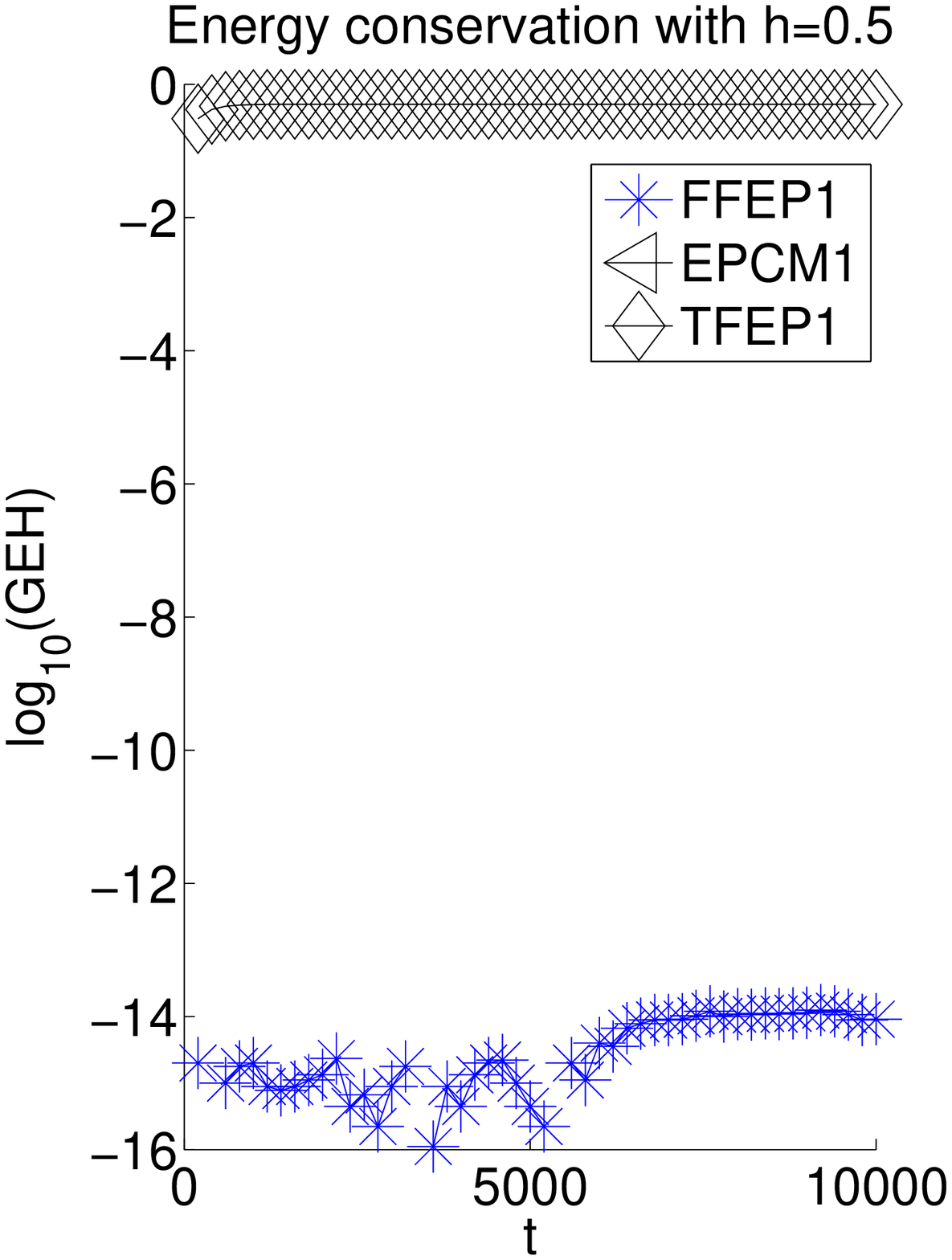}
\includegraphics[width=6cm,height=8cm]{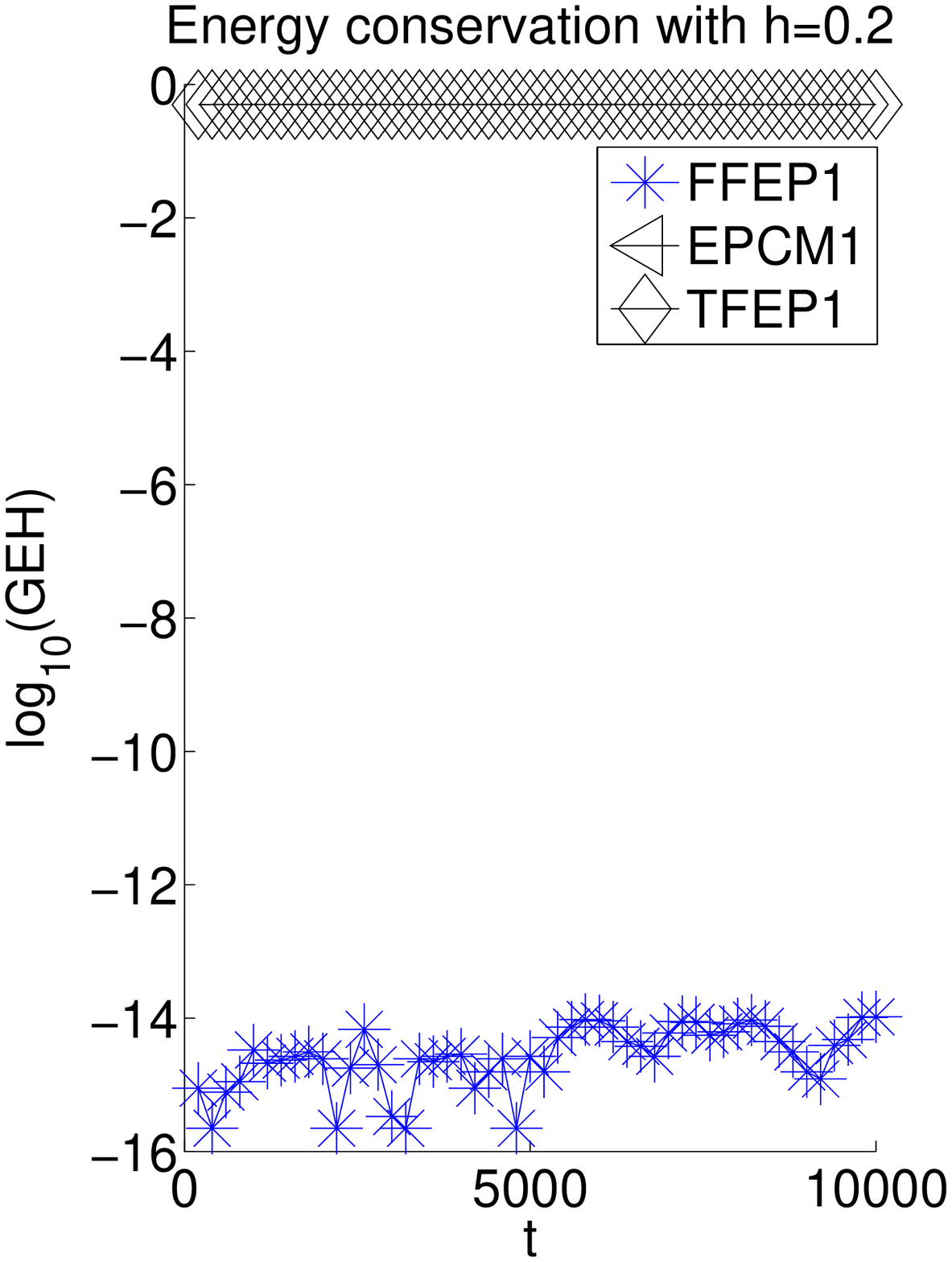}
\caption{  The logarithm of the  error of Hamiltonian against  $t$.
} \label{p1-4}
\end{figure}

 \begin{figure}[ptb]
 \centering
\includegraphics[width=6cm,height=8cm]{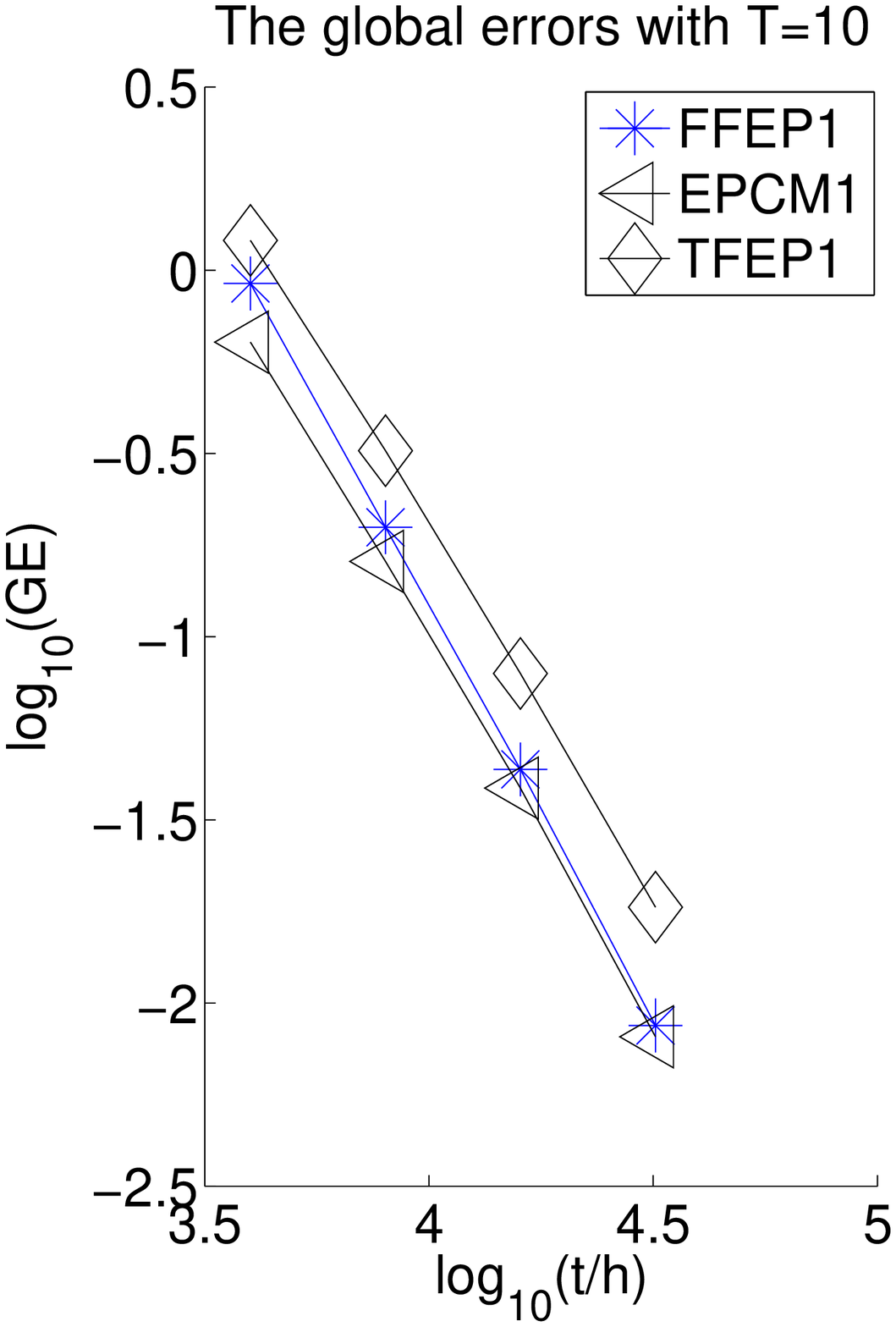}
\includegraphics[width=6cm,height=8cm]{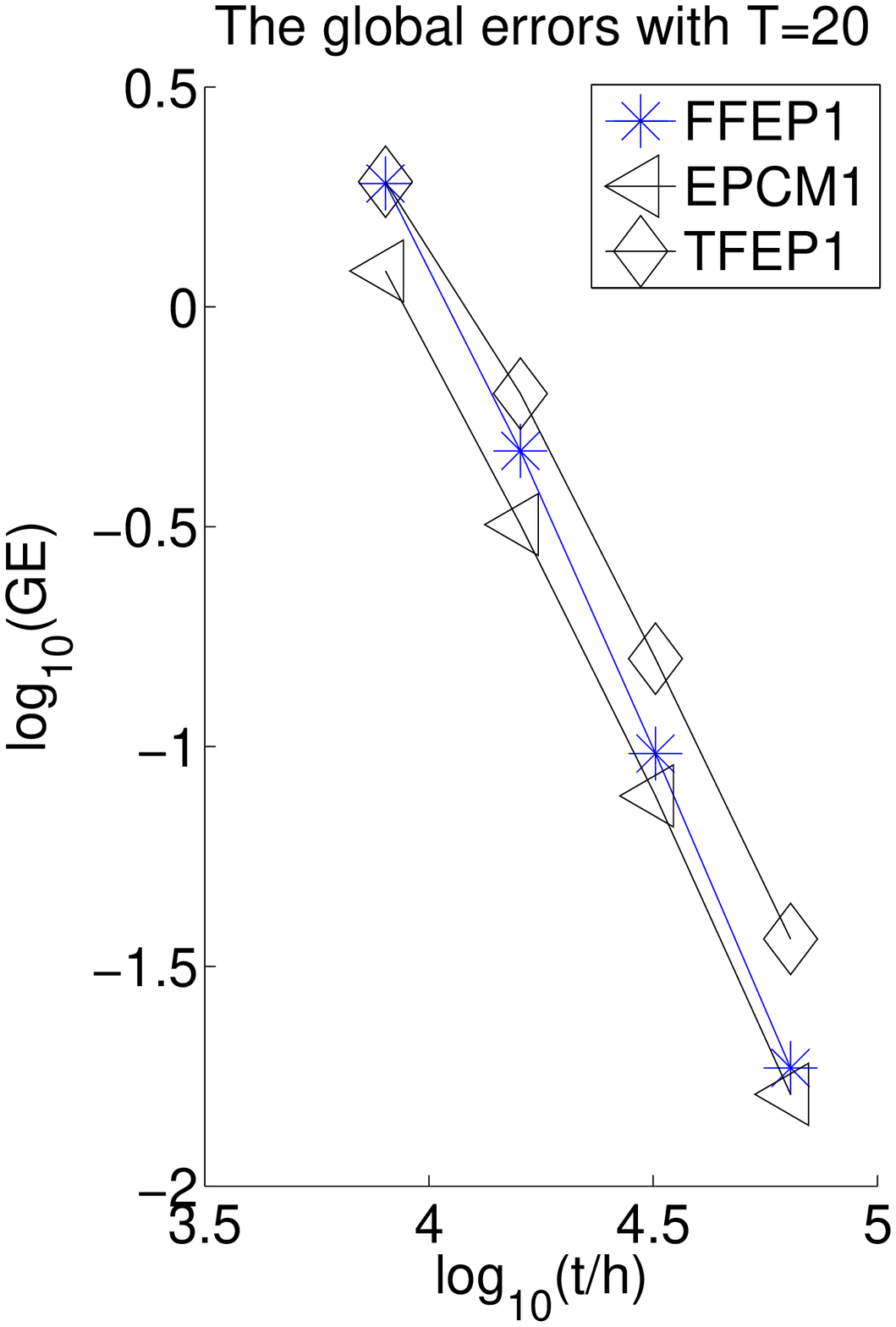}
\caption{ The logarithm of the  global error against the logarithm
of $t/h$.  } \label{p1-3}
\end{figure}

  It can be concluded from the numerical results  that
our FFEP1 method when applied to the underlying Euler equation shows
remarkable numerical behaviour in comparison with the existing EP
methods in the literature.

\section{Conclusions} \label{sec:conclusions}

In this paper, we derived and analysed functionally-fitted
energy-preserving integrators for Poisson systems by using
functionally-fitted technology. It has been shown that the novel
integrators preserve exactly the energy of Poisson systems and can
be of arbitrary-order  in a   convenient manner. The new integrators
contain the energy-preserving  schemes  given by Cohen and Hairer
\cite{Cohen-2011} and Brugnano et al. \cite{Brugnano2012}. The
remarkable efficiency  and robustness  of the integrators were
demonstrated through the numerical experiments for the Euler
equation. Our future work will be focused on developing
functionally-fitted energy-preserving integrators  for gradient
systems. We are hopeful of obtaining some new results within this
framework.

\end{document}